\newtheorem{example}{Example}[section]
    \newcommand{\T}{\textstyle}
    \newcommand{\Z}{{\mathbb{Z}}}
\newcommand{\Y}{{\mathbb{Y}}}
\newcommand{\subdiff}{{\partial^{\rm sub}}}
\newcommand{\wto}{{\rightharpoonup}}
    \newcommand{\G}{\mathcal{I}}
    \newcommand{\R}{\mathbb{R}}
    \newcommand{\be}{\begin{eqnarray}}
    \newcommand{\ee}{\end{eqnarray}}
\newcommand{\N}{{\mathbb N}}
\newcommand{\defeq}{:=}
\newcommand{\cof}{{\rm cof}}
\renewcommand{\det}{{\rm det}}
\newcommand{\D}{\mathcal{D}}
\renewcommand{\(}{\left(}
\renewcommand{\)}{\right)}
\renewcommand{\O}{\Omega}
\newcommand{\p}{{\rm p}}
\newcommand{\e}{{\rm e}}
\renewcommand{\d}{\, \mathrm{d}}
\newcommand{\md}{\mathrm{d}}
\newcommand{\norm}[1]{\left\lVert #1\right\rVert}
\newcommand{\I}{{\mathcal{I}}}
\newcommand\gto{\mathrel{\overset{\makebox[0pt]{\mbox{\normalfont\tiny\sffamily gpc}}}{\to}}}
\newcommand{\NN}{N}
\renewcommand{\e}{{\rm e}}
\newcommandx{\unsure}[2][1=]{\todo[linecolor=red,backgroundcolor=red!25,bordercolor=red,#1]{#2}}
\newcommandx{\change}[2][1=]{\todo[linecolor=blue,backgroundcolor=blue!25,bordercolor=blue,#1]{#2}}
\newcommandx{\improvement}[2][1=]{\todo[linecolor=Plum,backgroundcolor=Plum!25,bordercolor=Plum,#1]{#2}}
\newcommandx{\thiswillnotshow}[2][1=]{\todo[disable,#1]{#2}}
\newcommand\EEE{\color{black}}
\newcommand{\mk}{\color{black}}
\newcommand{\jz}{\color{black}}
\numberwithin{equation}{section}
\mathchardef\emptyset="001F
\newtheorem{theorem}{Theorem}[section]
\newtheorem{proposition}[theorem]{Proposition}
\newtheorem{lemma}[theorem]{Lemma}
\theoremstyle{definition}
\newtheorem{remark}[theorem]{Remark}
\newtheorem{definition}[theorem]{Definition}
\begin{document}

\keywords{elastoplasticity, energetic solution, gradient polyconvexity, rate-independent model}



\title[Elastoplasticity of gradient-polyconvex materials]{Elastoplasticity of gradient-polyconvex materials}

\author{Martin Kru\v z\'ik} 
\address[Martin Kru\v z\'ik]{Czech Academy of Sciences, Institute of Information Theory and Automation,  Pod vod\'arenskou ve\v z\'i 4, 182 08 Prague, Czechia and Faculty of Civil Engineering, Czech Technical University, Th\'{a}kurova 7, 166 29 Prague, Czechia}
\email{kruzik@utia.cas.cz}
\urladdr{http://staff.utia.cas.cz/kruzik}

\author{Ji\v{r}\'{\i} Zeman}
\address[Ji\v{r}\'{\i} Zeman]{Institut f\"{u}r Mathematik, Universit\"{a}t Augsburg, D-86135 Augsburg, Germany }
\email{jiri.zeman@math.uni-augsburg.de}
\urladdr{https://www.uni-augsburg.de/de/fakultaet/mntf/math/prof/ana/arbeitsgruppe/jiri-zeman}

\begin{abstract}
We propose a model for rate-independent evolution in elastoplastic  materials under external loading, which allows large strains. In the setting of strain-gradient plasticity with multiplicative decomposition of the deformation gradient, we prove the existence of
the so-called energetic solution. The stored  energy  density function is assumed to depend on gradients of minors of the deformation gradient  which makes our results applicable to shape-memory materials, for instance.  
\end{abstract}
\maketitle                   






\section{Introduction and notation}
Elastoplasticity at large strains is an area of ongoing research, bringing together contributions from modelling, analysis and numerical simulations. For the mathematical analysis of elastoplastic models, it is often convenient to use powerful tools from the calculus of variations, which are now able to treat quasistatic evolutionary problems as well (see e.g. \cite{ortiz-repetto}, \cite{mielke1} as pioneering works). In principle, the existence of solutions could be ensured by assuming \jz some \mk generalized convexity \EEE of the strain energy, but general material behavior may contradict this assumption. For \jz example\EEE, this is manifested in shape-memory alloys (SMA) \cite{wlsc}, magnetostrictive \cite{desimone} and ferroelectric materials \cite{shu}, even if convexity is understood in a generalized sense, such as \textit{polyconvexity} (first defined in \cite{ball77}). 

As a remedy, one can then recourse to higher-gradient regularizations, where the stored energy density $W$ also depends e.g. on the second gradient of the deformation. From a mathematical point of view, this adds compactness to the model, which is instrumental in proving the existence of solutions by the direct method. \cite{ball81} Materials with such constitutive equations are referred to as non-simple and were introduced by Toupin \cite{toupin, toupin2}. Since then, the concept has been elaborated by many authors so that its thermodynamical side is also better understood \cite{greenRivlin, capriz, podio, hooke2, forest, mierouNum, forest2}.

\textit{Gradient-polyconvex} (GPC) materials form a special class of non-simple solids and appeared in \cite{bbmkas} for the first time. Their stored energy density is not a function of the full second gradient $\nabla^2 y$ of the deformation $y\colon\Omega\to\R^3$, \mk which maps the reference configuration $\Omega$ to $\R^3$ \EEE, but it only depends, in a convex way, on the weak gradients of $\cof\nabla y$ (and that of $\det\nabla y$, if desirable). (See Section \ref{sec:main} for the definition of a cofactor matrix.) To interpret the condition physically, note that since $\det\nabla y$ measures the local change in volume between the reference and current
configuration of the material and $\cof\nabla y$ describes the transformation of area \cite[p.~78]{gurtinBook}, the stored energy $W$ depending on their gradients offers a control of how abruptly these
changes vary in space. 
Gradient polyconvexity has since been applied to the \mk evolution \EEE of SMA \cite{mkppas} and a numerical implementation of GPC \mk material models  \EEE is also available \cite{mhmk}.  \mk GPC allows us to consider stored energy densities without assuming any notion of convexity in the deformation gradient variable. This  makes it suitable for \jz modelling \mk of \jz shape-memory \mk alloys, for instance, because the resulting functional is lower semicontinuous in the underlying weak topology. An alternative approach to energy functionals that are not lower semicontinuous  is relaxation \cite{dacorogna}, however, available results prevent us from considering energies tending to infinity for extreme compression. \EEE In this article, we study an elastoplastic model using gradient polyconvexity.

The idea of strain-gradient plasticity is similar to that of nonsimple materials, as it also uses higher-order terms to prevent physical quantities from unrealistic fine-scale oscillations. Incorporating gradients of plastic variables in the constitutive equations is common in the engineering literature \cite[p.~250]{RIS} and we refer an interested reader to \cite{dk, gurtin, Gurtin-Anand, mktr-book, zpbmj, muaif, tsa-aif} or \cite{ebobisse} and the references therein. Gradient terms account for non-local interactions of dislocations and we include them as well, as they offer a suitable regularization to our model.

For our problem, we formulate the so-called \textit{energetic solution} (let us name \cite{AMFTVL} as an early reference; other related sources are cited in \cite{RIS}). One advantage of the energetic formulation is that it avoids derivatives of constitutive equations and time derivatives of the solution itself. \cite{mielke1} The variational nature of this solution concept also combines well with homogenization and relaxation. Two conditions lie at the core of the energetic formulation: a \textit{stability inequality}, which couples minimization of the elastic energy with a principle of maximum dissipation, and an \textit{energy balance}. The two requirements together imply that a usual \textit{plastic flow rule} is satisfied for sufficiently smooth solutions. 

\mk The plan of the paper is as follows: \jz in \EEE Section 2 we review some basic facts from the modelling of \textit{materials with internal variables}, loosely following \cite{fm} and \cite{han}. For the sake of completeness, we also motivate the definition of an energetic solution, although this has been done more thoroughly in previous works of Mielke {\it et al.} The main part of this paper is Section 3, where we study the rate-independent behavior of elastoplastic GPC materials under external loads and prove the existence result. The section is concluded with an example from crystal plasticity, which illustrates the usability of our findings.

Our approach draws inspiration from \cite{mami2}, where the existence of energetic solutions in large-strain elastoplasticity is proved in the presence of plastic strain gradients and a polyconvex $W$. However, as our stored energy is gradient polyconvex, our findings apply to the setting of multiwell \mk energies\EEE, as encountered e.g. in shape-memory alloys, cf. \cite{bhatta}, \cite{mierou} or \cite{krzi} for instance. We remark that energetic formulations in elastoplasticity have also been propounded: in the linear framework \cite{gl}, without strain gradients \cite{cckham}, using a finite quasiconvex energy \cite{jkmk}, involving a plastic Cauchy-Green tensor \cite{grandi1, grandi2}, for numerical computations \cite{mierouNum}, and elsewhere. The paper \cite{critplast} discusses different assumptions in quasistatic large-strain elastoplastic evolutions. Lastly, in \cite{mkdmus}, rate-independent dislocation-free plasticity is treated. 
 
 In what follows, $L^\beta(\O;\R^n)$, $1\le\beta<+\infty$ denotes the usual Lebesgue space of mappings $\O\to\R^n$ whose modulus is  integrable with the power $\beta$ and $L^\infty(\O;\R^n)$ is the space of measurable and essentially bounded mappings $\O\to\R^n$.
 Further, $W^{1,\beta}(\O;\R^n)$ standardly represents the space of mappings which live in  $L^\beta(\O;\R^n)$ and their gradients belong to $L^\beta(\O;\R^{n\times n})$. Finally, $W^{1,\beta}_0(\O;\R^n)$ is a subspace of $W^{1,\beta}(\O;\R^n)$  of maps with a zero trace on $\partial\O$.
 The weak convergence in $L^\beta(\O;\R^n)$ is defined as follows: $y_k\to y$ weakly in $L^\beta(\O;\R^n)$ (weakly star for $\beta=+\infty$) if $\int_\O y_k(x)\cdot\varphi(x)\,\md x\to\int_\O y(x)\cdot \varphi(x)\,\md x$ for all $\varphi\in L^{\beta'}(\O;\R^n)$ where $\beta'=\beta/(\beta-1)$ if $1<\beta<+\infty$, $\beta'=1$
 if $\beta=+\infty$ and $\beta'=+\infty$ for $\beta=1$.  Weak convergence of mappings and their gradients in $L^\beta$ then defines the weak convergence in $W^{1,\beta}(\O;\R^n)$. We also write \textit{w}-$\lim_{k\to+\infty}y_k=y$ or $y_k\wto y$ to denote weak convergence. Finally, $C(\O)$  or $C(\R^{n\times n})$ stand for function spaces of functions continuous on $\O$ or $\R^{n\times n}$, respectively.

 If $f\colon\R^n\to\R$ is convex but possibly nonsmooth we define its subdifferential at a point $x_0\in\R^n$ as the set of all $v\in\R^n$ such that $f(x)\ge f(x_0)+v\cdot(x-x_0)$ for all $x\in\R^n$. The subdifferential of $f$ will be denoted $\partial^{\rm sub}f$ and its elements will be called subgradients of $f$ at $x_0$.

 \bigskip
 
 \section{Motivation: modelling inelastic processes with internal variables} 

Consider $\O\subset\R^n$, a bounded Lipschitz domain representing the so-called reference configuration of a solid body, and a mapping $y\colon\O\to\R^n$, the deformation which the body is subjected to. \mk We explain the idea of elastoplasticity and the concept of energetic solutions following freely the exposition in \cite{fm} \jz and restricting ourselves to simple materials.\EEE

According to Han and Reddy \cite[p.~34]{han}, plastic deformation `is most conveniently described in the framework of materials with internal variables'. Those material models are not only governed by external (controllable) variables, such as temperature or strain, but also incorporate a vector $z\in Z\subset\R^m$ of internal variables, which describe e.g. an ongoing chemical reaction, elastoplastic
behavior or material damage. (Details can be found in \cite{HNguyen} or in more recent works on the subject listed in \cite[p.~39]{han}.) The \textit{hyperelastic} stored energy density $\mathcal{W}$ then has the form $\mathcal{W}=\mathcal{W}(F,z)$, if we consider a {\it simple material}, i.e. a material with constitutive equations involving only the first gradient $F=\nabla y$ of the deformation $y$. 

The thermodynamic conception is usually that differentiating $\mathcal{W}$ with respect to $F$, we get the mechanical stress, whereas the derivative $-\partial_z \mathcal{W}$ gives another stress-like variable – the so-called thermodynamic force 
\be
Q:=-\frac{\partial}{\partial z}\mathcal{W}(F,z)\ee
associated with the internal variable $z$. We can imagine that $Q$ tries to restructure the material irreversibly, which would lead to changing the value of $z$. The development of convex analysis \cite{moreau}  allowed quite a general formulation of an evolution rule for the internal variable $z$. Assuming the existence of a nonnegative convex potential of dissipative forces $\delta=\delta(\dot z)$, where $\dot z$ denotes the time derivative of $z$,  we write the \textit{flow law} as 
\be\label{fr} Q(t)\in\subdiff\delta(\dot z(t))\ee
everywhere in $\Omega$. A common simplifying assumption is \textit{rate-independent} behavior. It is suitable for some particular materials or for the modelling of processes with low rates of external loading. In simple terms, rate-independence means that rescaling the loading in time only results in a corresponding time-rescaling of the deformation and no additional viscous, inertial or thermal effects arise. Rate-independence translates into positive one-homogeneity of $\delta$, i.e. $\delta(\alpha\dot z)=\alpha\delta(\dot z)$ for all $\alpha>0$.

\bigskip

\begin{remark}
Since the subdifferential is \textit{monotone} (see \cite{rockafellar}), by (\ref{fr}) we have
\be
(Q(t)-\theta)\cdot(\dot z(t)-\xi)\ge 0.
\ee
for all $\theta\in\subdiff\delta(\xi)$. Choosing $\xi=0$ and observing that the one-homogeneity of $\delta$ implies $\delta(\dot z)=\omega\cdot\dot z$ for all $\omega\in\subdiff\delta(\dot z)$  we get 
\be\label{mdp}
\delta(\dot z(t))=Q(t)\cdot \dot z(t)\ge \theta\cdot\dot z(t)
\ee
for all $\theta\in\subdiff\delta(0)$.
Hence we derived the so-called {\it maximum dissipation principle} (see e.g.~ \cite{hill} or \cite{simo, simo2}) which states that the plastic dissipation that takes place in reality is not less than any possible dissipation due to thermodynamic forces ``available'' in the so-called {\it elastic domain} $\subdiff\delta(0)$.
\end{remark}

\bigskip

Hereafter, $\nu$ is the outer unit normal to $\partial\O$, and $\partial\O\supset\Gamma_0,\Gamma_1$ which are disjoint. Let $f(t)\colon\O\to\R^n$ be  the (volume) density of external \textit{body} forces
and $g(t)\colon\Gamma_1\subset\partial\O\to\R^n$ be the (surface) density of surface forces. The conservation of momentum yields the \textit{equilibrium equations}

\be\label{rovnovaha}
-{\rm div}\left(\frac{\partial}{\partial F}\mathcal{W}(\nabla y(t),z(t))\right) = f(t) \mbox{ in $\O$},
\ee
 \be\label{Bcond} y(t,x)=y_0(x) \mbox{ on $\Gamma_0$},\ee
\vspace{-2mm}
 \be\label{neumann}\frac{\partial}{\partial F}\mathcal{W}(\nabla y(t),z(t)) \nu(x) =g(t,x) \mbox{ on $\Gamma_1$}.\ee

If $z_0\in Z$ is an initial condition for the internal variable, the system of equations (\ref{rovnovaha})-(\ref{neumann}) together with (\ref{fr1}): 

 \be\label{fr1}-\frac{\partial}{\partial z }\mathcal{W}(\nabla y(t),z(t))\in\partial\delta(\dot z(t)),\ z(0)=z_0,\ee
governs the mechanical behavior characterized by the unknowns $y(t)$, $z(t)$.

Unfortunately, the system (\ref{rovnovaha})-(\ref{neumann}) is ill-posed in many situations. \mk See  the works of Suquet \cite{suquet} and Temam \cite{temam} for early analysis of this problem.\EEE 

To get existence results, it seems \mk  necessary \EEE to include the gradient of the internal variable, i.e. use the strain energy density
$$\tilde{\mathcal{W}}(\nabla y, z,\nabla z):=\mathcal{W}(\nabla y, z)+\epsilon|\nabla z|^\alpha$$
for $\alpha\ge 1$ and $\varepsilon>0$. Let us briefly recall how an energy balance, which appears in the \textit{energetic} formulation of this regularized problem, is obtained, under sufficient smoothness and integrability assumptions on all the present mappings. For details, see \cite{fm,mielke1}.

The functional
\be \mathcal{I}(t,y(t),z(t)):=\int_\O \mathcal{W}(\nabla y(t),z(t))\,\md x+\epsilon\int_\O |\nabla z(t)|^\alpha\,\md x-L(t,y(t)),\ee
expresses the potential energy in our system, where the work done by external forces is
\be\label{loading1}
L(t,y(t)):=\int_\O f(t)\cdot y(t)\,\md x +\int_{\Gamma_1} g(t)\cdot y(t)\,\md S.\
\ee

We also introduce the total dissipation along $z$,
$$
{\rm Diss}(z;[0,t]):=\int_0^t\int_\O\delta(\dot z(s))\,\md x\md s.$$

Calculating the thermodynamic force $\partial_z \tilde{\mathcal{W}}$, using the relation $\delta(\dot z)=\omega\cdot\dot z$, $\omega\in\subdiff\delta(\dot z)$, mentioned above (\ref{mdp}), we can deduce from (\ref{rovnovaha})-(\ref{neumann}) the following energy balance:
$$
\mathcal{I}(t,y(t),z(t))+{\rm Diss}(z;[0,t])=\mathcal{I}(0,y(0),z(0))+\int_0^t\dot L(s,y(s))\,\md s.$$

Due to low regularity in time, a more general expression for ${\rm Diss}(z;[0,t])$ must be used in practice, though – see (\ref{e-bal}) below.

To this end, we define a dissipation distance between two values $z_0,z_1\in \Z$ of the internal variable: 
\be\label{nodist}
D(x,z_0,z_1):=\inf_z\left\{\int_0^1 \delta(x,z(s),\dot z(s))\,\md s;\ z(0)=z_0,\ z(1)=z_1\right\},\ee
where $z\in C^1([0,1];\Z)$, and set 
\be\label{dist}
\D(z_1,z_2)=\int_\O D(x,z_1(x),z_2(x))\,\md x
\ee
for $z_1,z_2\in \mathbb{Z}:=\{z\colon\O\to\R^m;\ z(x)\in Z\mbox{ a.e. in $\O$}\}$, as in \cite{mielke1}.

In order to find a quasistatic evolution of the system Mielke, Theil, and Levitas \cite{AMFTVL} came up with the following definition of the energetic solution which conveniently overcomes nonsmoothness mentioned above.  Moreover, it fully exploits the possible variational structure of the problem and allows for very general 
energy and dissipation functionals. This concept has versatile applications to many problems in continuum mechanics of solids. Additionally,  working with $\I$ and $\D$ directly allows us to include also higher derivatives of $y$ into the model  or to require integrability of some functions of $\nabla y$ as also done in this contribution.

\subsection{Energetic solution}
Suppose that the evolution of $y(t)\in \Y$ and $z(t)\in \Z$ is studied during a time interval $[0,T]$.
The following two properties characterize the energetic solution due to Mielke {\it et al.} \cite{AMFTVL}.

\noindent
(i) Stability inequality:\\
$\forall t\in[0,T],\, \tilde z\in \Z,\, \tilde{y}\in \Y$:
\be\label{stblt}\mathcal{I}(t,y(t),z(t))\le \mathcal{I}(t,\tilde y,\tilde z)+\mathcal{D}(z(t),\tilde z)\ee

\noindent
(ii) Energy balance:
$\forall\ 0\le t\le T$

\be\label{e-bal}\mathcal{I}(t,y(t),z(t))+{\rm Var}(\mathcal{D},z;[0,t]) =\mathcal{I}(0,y(0),z(0)) +\int_0^t \dot L(\xi,y(\xi))\,\md \xi, \ee

$$\text{where }
{\rm Var}(\mathcal{D},z;[s,t]):=\sup\left\{\sum_{i=1}^N \mathcal{D}(z(t_i),z(t_{i-1}));\ \{t_i\} \mbox{ partition of } [s,t]\right\}.$$  
  
\begin{definition}\label{en-so}   
The mapping $t\mapsto(y(t),z(t))\in \Y\times \Z $ is an energetic solution to the problem
$(\mathcal{I},\delta, L)$ if the stability inequality and energy balance 
are satisfied for all $t\in [0,T]$.   
\end{definition} 

\bigskip

\begin{remark}
The mechanical idea behind stability inequality (i) is the following: imagine first that $\tilde{z} := z(t)$,
then $\D(z(t),\tilde{z})$ vanishes, since no change in the internal variables implies no dissipation.
Consequently, (i) simplifies to $\mathcal{I}(t,y(t),z(t))\le \mathcal{I}(t,\tilde y,\tilde z)$ for all $\tilde{y}\in \Y$ and $y(t)$ is a
global minimizer of $\mathcal{I}(t,\cdot, z(t))$ over $\Y$. So we see that in this case, (i) has the meaning of an elastic equilibrium. If $\tilde{z}\neq z(t)$, then the amount of dissipated
energy between the states $\tilde{z}$ and $z(t)$ must, by (i), at least compensate for, if not outweigh the associated loss in the total energy, which is a version of the principle of maximum dissipation. \cite{mielke1} 
\end{remark}

We will write $\mathbb{Q}:=\mathbb{Y}\times \mathbb{Z}$  and set $q:=(y,z)$.
Next let us define the set of \textit{stable states} at time $t$ as 

\be
\mathcal{S}(t):=\{q\in\mathbb{Q};\ \forall \tilde q\in\mathbb{Q}:\  \mathcal{I}(t,q)\le \mathcal{I}(t,\tilde q)+\mathcal{D}(q,\tilde q)\}
\ee
and 
\be\mathcal{S}_{[0,T]}:=\bigcup_{t\in[0,T]}\{t\}\times\mathcal{S}(t).\ee

Moreover, a sequence $\{(t_k,q_k)\}_{k\in\N}$  is called {\it stable} if $q_k\in \mathcal{S}(t_k)$.

\bigskip

\section{Applications to elastoplasticity}\label{sec:main}
This section shows how the energetic approach can be applied to an elastoplastic problem of gradient-polyconvex materials.

\subsection{Gradient polyconvexity}
Gradient polyconvexity was first defined in \cite{bbmkas} in analogy with classical polyconvexity \cite{ball77}. The difference is that here we assume that the stored energy density is convex in gradients of minors of the deformation gradient but not in minors alone. More precisely, the following definition is taken from \cite{bbmkas}. We recall that for an invertible $F\in\R^{n\times n}$ we  define the cofactor of $F$, $\cof F=(\det F) F^{-\top}\in\R^{n\times n}$. \mk As  is shown in \cite{bbmkas} there are maps $y\in W^{1,1}(\Omega;\R^3)$ such that $\det\nabla y$ and $\cof\nabla y$ are Lipschitz continuous but $y\not\in W^{2,1}(\Omega;\R^3)$. The same conclusion can be reached for every $n\ge 3$.  On the other hand, if $n=2$ then $\nabla\cof\nabla y$ has the same entries (up to the minus sign) as $\nabla^2 y$.  \EEE

\begin{definition} \label{def-gpc}
Let $\O\subset\R^n$ be a bounded open domain. Let $\mathcal{W}_1\colon\R^{n\times n}\times\R^{n\times n\times n}\to\R\cup\{+\infty\}$ be a lower semicontinuous function. The functional
\begin{align}\label{full-I}
J(y) = \int_\Omega \mathcal{W}_1(\nabla y(x), \nabla[ \cof \nabla y(x)]) \d x,
\end{align}
defined for any measurable function \mk  $y\colon \Omega \to \R^n$ for which the weak derivatives $\nabla y$, $\nabla[ \cof \nabla y]$ exist and are integrable is called {\em gradient polyconvex} if the function $\mathcal{W}_1(F,\cdot)$ is convex for every  $F\in\R^{n\times n}$. \EEE
\end{definition}

 We assume that for some $c>0$, and  numbers \jz $\alpha> n-1$\EEE, $s>0$ it holds that  for every $F\in\R^{n\times n}$ and every $H\in\R^{n\times n\times n}$ we have that
\begin{align}\label{growth-graddet1}
\mathcal{W}_1(F,H)\ge\begin{cases}
c\big(|F|^\alpha  + (\det F)^{-s}+|H|^{\alpha/(n-1)} \big)
&\text{ if }\det F>0,\\
+\infty&\text{ otherwise,} \end{cases}
\end{align}
where $|\cdot|$ denotes the Euclidean norm. These growth assumptions can be surely weakened but we stick to them in order to simplify our presentation.

The idea behind condition \eqref{growth-graddet1} is that the energy blows up if the deformation does not preserve orientation or if the measures of strain on the right-hand side grow to extreme values.  Coercivity conditions involving $|F|$ and $\det F$ are commonly used in  nonlinear elasticity (see e.g. \cite{ciarlet}) and reflect variations in volume or changes of the distances of points caused by the deformation. The term  $H$ which is a placeholder for $\nabla\cof\nabla y$ penalizes spatial changes of $\cof\nabla y$ and, consequently, it aims at suppressing abrupt areal changes in the deformed configuration. 

An important feature of gradient polyconvexity is that {\it no} convexity assumptions on $\mathcal{W}_1$ are needed in the $F$-variable, so that very general material laws can be considered including multiwell energy functions \cite{bhatta}  or the St.~Venant–Kirchhoff energy density \cite{ciarlet}.  We call materials whose  stored $J$ energy obeys  \eqref{full-I} also gradient-polyconvex. 

\subsection{Assumptions on problem data}\label{assump}

As in \cite{gurtin}  we will consider  so-called {\it separable materials}, i.e. materials where the elastoplastic energy density 
has the form 
\be\label{separable}
\mathcal{W}(F_\e, H, F_p,\nabla F_\p, p,\nabla p):=\mathcal{W}_1(F_\e, H)+\mathcal{W}_2(F_p,\nabla F_\p, p,\nabla p).
\ee   

We will assume that $\mathcal{W}_1$ is continuous and satisfies \eqref{growth-graddet1} while for $\mathcal{W}_2$ we require:\\
\noindent
(i) The plastic part $\mathcal{W}_2$ is continuous in its  all  arguments. 

\noindent
(ii) Suppose that  there are two constants $C,c>0$ so that the  following assumption holds for constants $c_1>0$,
$\beta>n$, and $\omega>n$:

\begin{align}\label{growth}
C(1+|F_{\rm p}|^{\beta}+|G|^{\beta} +|p|^{\omega}+|\pi|^{\omega})&\ge\mathcal{W}_2(F_\p, G, p,\pi)\nonumber\\
&\ge c(|F_\p|^{\beta}+|G|^{\beta} +|p|^{\omega}+|\pi|^{\omega})-c_1.
\end{align}

\noindent
(iii) There is $c_2>0$, $v^*\in\R^m$ and a modulus of continuity \jz $\hat{\omega}$ such that for all $\hat{\alpha}>0$ \EEE, $F_{\rm p}\in\R^{n\times n}$, $G\in\R^{n\times n\times n}$, $p\in\R^m$ and $\pi\in\R^{m\times n}$:
\be\label{W2cont}
|\mathcal{W}_2(F_\p,G,p+\hat{\alpha} v^*,\pi)-\mathcal{W}_2(F_\p,G,p,\pi)|\le \hat{\omega}(\hat{\alpha})(\mathcal{W}_2(F_\p,G,p,\pi)+c_2).
\ee

Furthermore, let us suppose that for every $F_{\rm e}$, $F_{\rm p}\in\R^{n\times n}$ and $p\in \R^m$, the functions $\mathcal{W}_1(F_{\rm e},\cdot)$ and $\mathcal{W}_2(F_{\rm p},\cdot,p,\cdot)$ are convex.

 The dissipation distance $\D\colon\Z\times\Z\to[0,+\infty]$ takes the form (\ref{dist}) \jz for a function $D\colon \O\times ({\rm SL}(n)\times\R^m)^2$ \EEE and we only change the definition of $\Z$ to (\ref{Z}) below. We make the following assumptions on $\mathcal{D}$:

\noindent
(i) Lower semicontinuity:
\be\label{assonD1}
\mathcal{D}(z,\tilde z)\le\liminf_{k\to\infty}\mathcal{D}(z_k,\tilde z_k),\ee whenever $z_k\wto z$ and $\tilde z_k \wto\tilde z$. 

\noindent
(ii) Positivity:
\be\label{assonD2}
\text{If }\{z_k\}\subset Z\text{ is bounded and }\min\{\mathcal{D}(z_k,z),\mathcal{D}(z,z_k)\}\to 0\text{ then }z_k\wto z.
\ee

\noindent
(iii) For all $z_1$, $z_2\in\mathbb{Z}$: $\mathcal{D}(z_1, z_2)=0$ if and only if $z_1=z_2$.

\noindent
(iv) Triangle inequality: $\mathcal{D}(z_1, z_3)\le\mathcal{D}(z_1, z_2)+\mathcal{D}(z_2,z_3)$ for all $z_1$, $z_2$, $z_3\in\mathbb{Z}$.

We refer the reader to \cite{mami2} to see that (ii) follows from (i) and (iii). \jz After stating Proposition \ref{limstab}, we specify further assumptions on $D$ ((\ref{nodi1}) or (3.A)--(3.C)). Besides, it is naturally required that $D$ be such that (i)--(iv) holds.\EEE

In order to prove the existence of a solution to (\ref{timestep}) we must impose some data qualifications.
In what follows, we assume that 
\be\label{ass-f}f \in C^1\left([0, T]; L^{\tilde d} \left(\O; \R^n \right) \right),\ee
 \be\label{ass-g} g \in C^1\left([0, T]; L^{\hat d} \left(\Gamma_1; \R^n \right) \right),\ee
where \jz $\tilde d\ge [nd/(n-d)]'=nd/(nd-n+d)$ \EEE if $1\le d<n$ or $\tilde d> 1$ otherwise. 
Similarly, we suppose that \jz $\hat d\ge [(nd-d)/(n-d)]'=(nd-d)/(nd-n)$  \EEE if $d<n$ or $\hat d> 1$ otherwise.

\subsection{Formulation of the problem}
From now on, $y\colon\O\to\R^n$ will represent the deformation of a material body, whose reference configuration is a bounded Lipschitz domain $\O\subset\R^n$. Since $y$ models both \textit{elastic} and \textit{plastic} behaviour, we split the deformation gradient $F=\nabla y$ as $F=F_\e F_\p$, where $F_\e$ stands for an elastic part and $F_\p\in {\rm SL}(n):=\{A\in\R^{n\times n};\ {\rm det}\ A=1\}$ is a plastic part, which irreversibly transforms the material. To capture e.g. \textit{back stresses}, we use the vector $p\in\R^m$ of hardening internal variables. Written together, $z(x)=(F_\p(x),p(x))$ is a plastic variable, lying in ${\rm SL}(n)\times\R^m$ for almost all $x\in\O$.

The energy functional $\I$ is given by 
\be
\I(t,y(t),z(t)):=\int_\O \mathcal{W}(\nabla y F_\p^{-1}, \nabla[(\cof\nabla y)F_\p^\top], F_\p, \nabla F_\p, p,\nabla p)\,\md x-L(t,y(t)),
\ee
with $L$ defined in (\ref{loading1}).

Our stored energy density $\mathcal{W}$ does not explicitly depend on the spatial variable $x$, but treating the inhomogeneous case would not need many modifications.   

Let us remark that $(\cof \nabla y)F_\p^\top$ is the cofactor of the elastic part $F_\e$, since by the product rule for cofactor matrices \cite[p.~4]{ciarlet} and by $F_\p\in{\rm SL}(n)$, we have
$$\cof F_\e = \cof(FF_\p^{-1})=(\cof F)\cof(F_\p^{-1})=(\cof F)\det(F_\p^{-1})(F_\p^{-1})^{-\top}=(\cof F)F_\p^\top.$$
    
The admissible deformations $y$ lie in 
    $$\mathbb{Y}:=\{y\in W^{1,d}(\O;\R^n);\ y=y_0 \mbox{ on }\Gamma_0 \},
$$
    where $\Gamma_0\subset\partial\O$ with a positive surface measure and $y_0\in W^{1-1/d,d}(\Gamma_0;\R^n)$ is given. Assuming that $\Gamma_1\subset\partial\O$ as in Section 2, we suppose $\Gamma_0\cap\Gamma_1=\emptyset$.
For the internal states $z$ let us define the set 
\be\label{Z}
\mathbb{Z}:=\{(F_\p,p)\in W^{1,\beta}(\O;\R^{n\times n})\times W^{1,\omega}(\O;\R^m):\  F_\p(x)\in {\rm SL}(n) \mbox{ for a.e.~$x\in\O$}\}.
\ee

For ease of notation, we write $q=(y,z)\jz \in \mathbb{Q}=\mathbb{Y}\times\mathbb{Z}\EEE$ and understand \jz $\mathcal{I}(t,\cdot)$, $L(t,\cdot)$ and $\mathcal{D}$ \EEE as functions of $q$, that is:
\begin{gather*}
\I(t,q(t))=\int_\O \mathcal{W}(\nabla y F_\p^{-1}, \nabla[(\cof\nabla y)F_\p^{\top}], F_\p, \nabla F_\p, p,\nabla p)\,\md x-L(t,q(t)),\\
L(t,q(t):=L(t,y(t)),\\
\mathcal{D}(q_1,q_2):=\mathcal{D} (z_1, z_2)
\end{gather*}
if $q_1=(y_1,z_1)$ and $q_2=(y_2,z_2)$.

 In order to prove the existence of an energetic solution to our problem we will need the following results of technical nature.

\bigskip

\subsection{Auxiliary results}

We start this section by the following reverse Young inequality.

\begin{lemma}\label{Yconseq}
Suppose that $a>0$, $b>0$, $\delta>0$, $r>1$. Then 
\begin{equation*}
\frac{a}{b}\geq r\delta^\frac{r}{r-1}a^\frac{1}{r}-(r-1)\delta^\frac{r^2}{(r-1)^2}b^\frac{1}{r-1}.
\end{equation*} 
\end{lemma}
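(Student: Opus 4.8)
The plan is to obtain the estimate as a one-line consequence of the \emph{classical} Young inequality
$$
XY \le \frac{X^p}{p} + \frac{Y^q}{q}, \qquad X,Y\ge 0,\ \tfrac1p+\tfrac1q = 1,
$$
specialized to the conjugate pair $p = r$ and $q = \frac{r}{r-1}$ (which indeed satisfies $\frac1p + \frac1q = \frac1r + \frac{r-1}{r} = 1$, using $r>1$). The whole art lies in guessing $X$ and $Y$ so that the product $XY$ reproduces the term $\delta^{r/(r-1)}a^{1/r}$ appearing on the right-hand side, while the two summands $X^p/p$ and $Y^q/q$ reproduce, respectively, the ratio $a/b$ and the term involving $b^{1/(r-1)}$.

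Concretely, I would set
$$
X := \left(\frac{a}{b}\right)^{1/r}, \qquad Y := \delta^{r/(r-1)}\, b^{1/r},
$$
which is legitimate since $a,b,\delta>0$ make all the real powers well defined. A short bookkeeping of exponents then gives $X^p = X^r = a/b$ and $Y^q = Y^{r/(r-1)} = \delta^{r^2/(r-1)^2}\, b^{1/(r-1)}$, while the $b$-powers in the product cancel, $XY = a^{1/r}b^{-1/r}\cdot\delta^{r/(r-1)}b^{1/r} = \delta^{r/(r-1)}a^{1/r}$. Substituting into Young's inequality yields
$$
\delta^{r/(r-1)} a^{1/r} \le \frac{1}{r}\,\frac{a}{b} + \frac{r-1}{r}\,\delta^{r^2/(r-1)^2}\, b^{1/(r-1)},
$$
and multiplying through by $r$ and rearranging produces exactly the asserted inequality.

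There is no genuine obstacle here: the statement is an algebraic identity dressed as an inequality, and the only thing that can go wrong is mismatching the exponents of $\delta$ and $b$ in $Y$. I would therefore double-check that the choice of $Y$ forces the $\delta$-exponent in $Y^q$ to be $\frac{r^2}{(r-1)^2}$ and the $b$-exponent to be $\frac{1}{r-1}$, and that the $b$-powers in $XY$ cancel to leave $\delta^{r/(r-1)}a^{1/r}$; once these three checks pass, the proof is complete. It is worth noting \emph{why} one wants the inequality in this slightly awkward form: the free parameter $\delta$ lets one make the coefficient of $a^{1/r}$ as large as desired at the cost of a controllable $b^{1/(r-1)}$ penalty, which is precisely the kind of "splitting a ratio $a/b$" that is convenient in the coercivity and lower-semicontinuity estimates used later for the functional $\I$.
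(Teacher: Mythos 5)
Your proof is correct and is essentially the paper's own argument: both apply the classical Young inequality with the conjugate pair $p=r$, $q=\frac{r}{r-1}$, differing only in bookkeeping — the paper takes $\alpha=a^{1/r}$, $\beta=\delta^{r/(r-1)}b$ and divides by $b$ at the end, whereas you absorb the power of $b$ into the two factors from the start ($X=(a/b)^{1/r}$, $Y=\delta^{r/(r-1)}b^{1/r}$) and only multiply by $r$. The two computations coincide up to the stage at which one divides by $b$, so there is nothing further to compare.
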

\begin{proof}
Young's inequality  states that given a pair of positive numbers $\alpha, \beta$ and $1<p,q<+\infty$, $\frac{1}{p}+\frac{1}{q}=1$, then
\begin{equation}\label{eq:Young}
\alpha\beta\leq\frac{\alpha^p}{p}+\frac{\beta^q}{q}.
\end{equation}
Set $p=r$, $\alpha=a^\frac{1}{r}$, $\beta=\delta^\frac{r}{r-1}b$ in \eqref{eq:Young}. Then $q=\frac{r}{r-1}$ and Young's inequality yields
$$\frac{r-1}{r}\delta^\frac{r^2}{(r-1)^2}b^\frac{r}{r-1}+\frac{1}{r}a\geq \delta^\frac{r}{r-1}a^\frac{1}{r}b,$$
which after multiplying by $\frac{r}{b}$ implies the desired result.
\end{proof}

It will also be useful to give a name to a kind of convergence which makes $\I(t,\cdot)$ lower semicontinuous.

\begin{definition}
We say that the sequence \mk  $\{q_k\}_{k\in\N}\subset\mathbb{Q}$, \EEE $q_k=(y_k,z_k)=(y_k,F_{{\rm p}k},p_k)$ \emph{gpc-converges} to $q_*=(y_*,z_*)=(y_*,F_{{\rm p}*},p_*)$ if $z_k\wto z_*$ in $\mathbb{Z}$, $\nabla\cof(\nabla y_k F_{{\rm p}k}^{-1})\wto \nabla\cof(\nabla y_* F_{{\rm p}*}^{-1})$ in $L^{\alpha/(n-1)}(\O;\R^{n\times n\times n})$ and $\nabla y_k\to \nabla y_*$ in measure. We write $q_k\gto q_*$ for short.
\end{definition}

\begin{lemma}
Let $t_k\to t_*$ with $t_k$, $t_*\in [0,T]$, $k\in\N$, and $q_k\gto q_*$, $\{q_k\}_{k\in\N}\subset\mathbb{Q}$. Then $\I(t_*,q_*)\le\liminf_{k\to\infty}\I(t_k,q_k)$.
\end{lemma}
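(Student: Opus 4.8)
The plan is to exploit the separable structure \eqref{separable}, writing $\I(t,q)=\int_\O\mathcal W_1+\int_\O\mathcal W_2-L$, and to prove that along a gpc-converging sequence the two integral terms are weakly lower semicontinuous while the loading term $L$ is continuous; adding the three contributions then yields the claim. First I would dispose of the trivial case and pass to a (non-relabelled) subsequence along which $\liminf_{k\to\infty}\I(t_k,q_k)$ is attained as a finite limit, since otherwise there is nothing to prove.

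On this subsequence the energies stay bounded, so the coercivity \eqref{growth-graddet1} and \eqref{growth} supply the a priori bounds: $F_{\e k}:=\nabla y_k F_{\p k}^{-1}$ is bounded in $L^\alpha$, $H_k:=\nabla[(\cof\nabla y_k)F_{\p k}^\top]$ is bounded in $L^{\alpha/(n-1)}$, and $(F_{\p k},p_k)$ is bounded in $W^{1,\beta}\times W^{1,\omega}$. Because $\beta,\omega>n$, Rellich--Kondrachov gives $F_{\p k}\to F_{\p *}$ and $p_k\to p_*$ uniformly on $\overline\O$; since $\det F_{\p k}=1$, also $F_{\p k}^{-1}=(\cof F_{\p k})^\top$ stays bounded and converges uniformly. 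Combining this with $\nabla y_k\to\nabla y_*$ in measure yields $F_{\e k}\to F_{\e *}:=\nabla y_* F_{\p *}^{-1}$ in measure, and the $L^\alpha$-bound upgrades this, via equi-integrability and Vitali's theorem, to strong convergence in every $L^r$ with $1\le r<\alpha$, hence to a.e.\ convergence along a further subsequence. The bound on $\int_\O|\nabla y_k|^\alpha=\int_\O|F_{\e k}F_{\p k}|^\alpha$ together with the fixed Dirichlet datum on $\Gamma_0$ keeps $\{y_k\}$ bounded in $W^{1,d}$, so that $y_k\to y_*$ strongly in the Lebesgue and trace spaces dictated by the exponent conditions on $\tilde d$, $\hat d$.

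With these convergences the loading term is the easiest: from the $C^1$-in-time hypotheses \eqref{ass-f}--\eqref{ass-g} we get $f(t_k)\to f(t_*)$ in $L^{\tilde d}$ and $g(t_k)\to g(t_*)$ in $L^{\hat d}$, and these pair with the strong convergence of $y_k$ and of its trace on $\Gamma_1$ (the exponents $\tilde d$, $\hat d$ were chosen precisely so the dualities in \eqref{loading1} close), whence $L(t_k,q_k)\to L(t_*,q_*)$. For $\int_\O\mathcal W_2$ I would invoke a standard lower semicontinuity theorem for integrands continuous and convex in the gradient slot (here $\mathcal W_2(F_\p,\cdot,p,\cdot)$ is convex by assumption): the lower-order arguments $(F_{\p k},p_k)$ converge uniformly, the gradients $(\nabla F_{\p k},\nabla p_k)$ converge weakly in $L^\beta\times L^\omega$, and the lower bound in \eqref{growth} supplies the equi-integrability, so that $\int_\O\mathcal W_2$ is lower semicontinuous.

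The crux, and the step I expect to be the main obstacle, is the lower semicontinuity of $\int_\O\mathcal W_1(F_{\e k},H_k)$. Here $\mathcal W_1(F,\cdot)$ is convex by gradient polyconvexity (Definition \ref{def-gpc}), the weak convergence $H_k\wto H_*$ in $L^{\alpha/(n-1)}$ is built into $\gto$, and $H_*=\nabla\cof F_{\e *}$ holds by the very definition of gpc-convergence, so no identification of the weak limit is needed. The difficulty is that in the first slot I only have $F_{\e k}\to F_{\e *}$ a.e.\ (rather than a Sobolev-type convergence), while the integrand carries the singular barrier $(\det F)^{-s}$ under the hard constraint $\det F>0$. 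I would therefore apply an Ioffe-type lower semicontinuity theorem — integrand lower semicontinuous in all variables, convex in the slot converging weakly, and minorized by an equi-integrable sequence — which is tailored to exactly this combination of a.e.\ convergence in $F_{\e}$ with weak convergence in $H$; the coercivity \eqref{growth-graddet1} simultaneously controls the barrier term through Fatou along the a.e.-convergent subsequence and furnishes the uniform integrability that licenses the passage to the limit. Adding the two lower semicontinuous integrals to the convergent loading term gives $\I(t_*,q_*)\le\liminf_{k\to\infty}\I(t_k,q_k)$, and since the subsequence realized the liminf, the lemma follows.
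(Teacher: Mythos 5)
Your proposal is correct and takes essentially the same route as the paper: the paper's entire proof consists of invoking the Ioffe-type lower semicontinuity theorem \cite[Corollary 7.9]{FL} together with the remark that one can extract a subsequence along which $\nabla y_k F_{{\rm p}k}^{-1}\to \nabla y_* F_{{\rm p}*}^{-1}$ almost everywhere, which is exactly the key mechanism you describe (a.e.\ convergence in the non-convex slots, weak convergence and convexity in the gradient slots). The additional details you supply --- the coercivity argument bounding $y_k$ in $W^{1,d}$, the continuity of the loading term $L$, and the separate treatment of the $\mathcal{W}_2$ integral --- are precisely what the paper compresses into the word ``immediate''.
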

\begin{proof}
This is an immediate consequence of \cite[Corollary 7.9]{FL}. \jz Note that we can construct a subsequence such that $\nabla y_{k_j}F_{{\rm p}k_j}^{-1}\to\nabla y_* F_{{\rm p}*}^{-1}$ almost everywhere.\EEE
\end{proof}
Even though we do not have the weak lower semicontinuity of $\I$ in general, we can get it for a subsequence provided the respective values of $\I$ are bounded.
\begin{lemma}\label{gtosubseq}
\jz Provided that $\alpha^{-1}+\beta^{-1}\le d^{-1}< (n-1)^{-1}$ and $d>\frac{\beta(n-1)}{\beta-1}$\EEE, let $t_k\in [0,T]$, $k\in\N$, and $q_k\wto q_*$ in $\mathbb{Q}$. Suppose there is $C_I>0$ such that for all $k\in\N$ the bound $\I(t_k,q_k)\le C_I$ holds true. Then there exists a subsequence $\{q_{k_j}\}_{j\in\N}$ of $\{q_k\}_{k\in\N}$ that gpc-converges to the same limit $q_*$.
\end{lemma}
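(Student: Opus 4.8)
The plan is to turn the single energy bound $\I(t_k,q_k)\le C_I$ into a package of uniform a priori estimates, and then to upgrade the weak convergence $q_k\wto q_*$ to gpc-convergence along a subsequence by exploiting the gradient control on the elastic cofactor.

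First I would extract the estimates. Writing $F_{{\rm e}k}=\nabla y_k F_{{\rm p}k}^{-1}$ and $H_k=\nabla[(\cof\nabla y_k)F_{{\rm p}k}^\top]=\nabla\cof F_{{\rm e}k}$, the separable structure \eqref{separable} together with the coercivity bounds \eqref{growth-graddet1} and \eqref{growth} controls $\int_\O\mathcal{W}$ from below by $c(\|F_{{\rm e}k}\|_{L^\alpha}^\alpha+\|(\det F_{{\rm e}k})^{-1}\|_{L^s}^s+\|H_k\|_{L^{\alpha/(n-1)}}^{\alpha/(n-1)}+\|F_{{\rm p}k}\|_{W^{1,\beta}}^\beta+\|p_k\|_{W^{1,\omega}}^\omega)$ up to a constant. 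The only term with the wrong sign is the loading $-L(t_k,y_k)$. To absorb it I would first transfer coercivity from $F_{\rm e}$ to $\nabla y$: since $|F_{{\rm e}k}|\ge|\nabla y_k|/|F_{{\rm p}k}|$, the reverse Young inequality (Lemma \ref{Yconseq}), applied pointwise with exponents tuned by $\alpha^{-1}+\beta^{-1}\le d^{-1}$, yields $|F_{{\rm e}k}|^\alpha\ge c|\nabla y_k|^d-C|F_{{\rm p}k}|^\beta$, and the negative $|F_{{\rm p}k}|^\beta$-contribution is swallowed by the $\mathcal{W}_2$-coercivity. This makes $\nabla y_k$ bounded in $L^d$ (in fact in $L^\alpha$, since the $W^{1,\beta}$-bound forces $F_{{\rm p}k}$ uniformly bounded), so $L(t_k,y_k)$ is estimated through \eqref{ass-f}--\eqref{ass-g}, Sobolev and trace embeddings, the datum on $\Gamma_0$, and Young's inequality, after which all the estimates above close uniformly in $k$.

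Next I would pass to limits. Because $\beta,\omega>n$, the embeddings $W^{1,\beta},W^{1,\omega}\hookrightarrow\hookrightarrow C(\bar\O)$ give, along a subsequence, $F_{{\rm p}k}\to F_{{\rm p}*}$ and $p_k\to p_*$ uniformly; hence $F_{{\rm p}k}^{-1}=(\cof F_{{\rm p}k})^\top\to F_{{\rm p}*}^{-1}$ uniformly and $\det F_{{\rm p}*}=1$. Since $|\cof F_{\rm e}|\le C|F_{\rm e}|^{n-1}$, the bound on $H_k$ makes $\cof F_{{\rm e}k}$ bounded in $W^{1,\alpha/(n-1)}$, so by Rellich $\cof F_{{\rm e}k}\to\Xi$ strongly in $L^{\alpha/(n-1)}$ and a.e., while $\nabla\cof F_{{\rm e}k}\wto\nabla\Xi$. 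To identify $\Xi$, note that $\nabla y_k\wto\nabla y_*$ in $L^\alpha$ with $\alpha>n-1$, so the weak continuity of the $(n-1)$-minors gives $\cof\nabla y_k\wto\cof\nabla y_*$; on the other hand $\cof\nabla y_k=(\cof F_{{\rm e}k})F_{{\rm p}k}^{-\top}\to\Xi F_{{\rm p}*}^{-\top}$ strongly, whence $\Xi=(\cof\nabla y_*)F_{{\rm p}*}^\top=\cof F_{{\rm e}*}$ with $F_{{\rm e}*}=\nabla y_* F_{{\rm p}*}^{-1}$. This already delivers the second requirement of gpc-convergence, $\nabla\cof F_{{\rm e}k}\wto\nabla\cof F_{{\rm e}*}$, while the first, $z_k\wto z_*$, is part of the hypothesis.

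The heart of the proof, and what I expect to be the main obstacle, is the remaining convergence $\nabla y_k\to\nabla y_*$ in measure, i.e.\ recovering the full gradient from its cofactor. Here I would use the pointwise identities $\det\cof A=(\det A)^{n-1}$ and $\cof(\cof A)=(\det A)^{n-2}A$. From $\cof F_{{\rm e}k}\to\cof F_{{\rm e}*}$ a.e.\ I get $\det F_{{\rm e}k}=(\det\cof F_{{\rm e}k})^{1/(n-1)}\to(\det\cof F_{{\rm e}*})^{1/(n-1)}=\det F_{{\rm e}*}$ a.e.\ (positive roots, legitimate since $\det F_{{\rm e}k}>0$ by finiteness of the energy); Fatou's lemma applied to the $L^s$-bounded $(\det F_{{\rm e}k})^{-s}$ then forces $\det F_{{\rm e}*}>0$ a.e. Consequently $F_{{\rm e}k}=\cof(\cof F_{{\rm e}k})/(\det F_{{\rm e}k})^{n-2}\to F_{{\rm e}*}$ a.e., hence in measure on the finite-measure domain $\O$, and finally $\nabla y_k=F_{{\rm e}k}F_{{\rm p}k}\to F_{{\rm e}*}F_{{\rm p}*}=\nabla y_*$ in measure by the uniform convergence of $F_{{\rm p}k}$. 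Throughout, the standing inequalities $\alpha^{-1}+\beta^{-1}\le d^{-1}<(n-1)^{-1}$ and $d>\beta(n-1)/(\beta-1)$ are exactly what make the reverse-Young coercivity transfer close, put $\nabla y_k$ in a space $L^p$ with $p>n-1$ where weak continuity of minors applies, and guarantee that the weak--strong products (in particular $(\cof F_{{\rm e}k})F_{{\rm p}k}^{-\top}$ and the pairings identifying the determinant) are integrable with exponent strictly above $1$; verifying these exponent inequalities is routine bookkeeping, but keeping $\det F_{\rm e}$ bounded away from zero is the genuinely delicate point.
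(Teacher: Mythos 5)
Your proposal is correct and takes essentially the same route as the paper's own proof, which is deferred to Proposition~\ref{timestep-ex}: coercivity plus the reverse Young inequality (Lemma~\ref{Yconseq}) for the a priori bounds, compact embeddings for $(F_{\rm p},p)$, boundedness of $\cof F_{\rm e}$ in $W^{1,\alpha/(n-1)}$ with identification of its weak limit via weak continuity of cofactors and a weak--strong product, a.e.\ positivity of the determinant from the $(\det F)^{-s}$ term, and algebraic recovery of $F_{\rm e}$ from $\cof F_{\rm e}$ to get convergence in measure. The only cosmetic difference is that you invert the cofactor map via $\cof(\cof A)=(\det A)^{n-2}A$, where the paper uses Cramer's rule and continuity of matrix inversion.
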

\begin{proof}
The proof goes the same way as in Proposition \ref{timestep-ex}. To keep the flow of ideas uninterrupted there, we postpone the presentation to that section. 
\end{proof}

\begin{proposition}\label{limstab}
Let $\I$ be lower semicontinuous with respect to gpc-convergence and let (\ref{ass-f}) and (\ref{ass-g}) hold. 
Let it for all  $(t_*,q_*)\in[0,T]\times\mathbb{Q}$ and all stable sequences  $\{(t_k,q_k)\}_{k\in\N}$  such that w-$\lim_{k\to\infty} (t_k,q_k)=(t_*,q_*)$
be true that for all $\tilde q\in\mathbb{Q}$ there is $\{\tilde q_k\}\subset \mathbb{Q}$ such that 
\begin{align}\label{limsup}
\limsup_{k\to\infty}(\I(t_k,\tilde q_k)+\mathcal{D}(q_k,\tilde q_k))
\le \I(t_*,\tilde q)+\mathcal{D}(q_*,\tilde q)).
\end{align}
Then for any stable sequence $\{(t_k,q_k)\}_{k\in\N}$ such that w-$\lim_{k\to\infty} (t_k,q_k)=(t_*,q_*)$ and $\I(t_k,q_k)\le C$ for some $C>0$, we have $\lim_{k\to\infty}\I(t_k,q_k)=\I(t_*,q_*)$ and $q_*\in\mathcal{S}(t_*)$.
\end{proposition}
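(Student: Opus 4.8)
The plan is to follow the classical closedness-of-stable-states argument of Mielke and Theil. The only nonstandard point is that the assumed lower semicontinuity of $\mathcal{I}$ is available along gpc-convergent sequences, whereas the sequence we are handed converges only weakly; this gap will be bridged by Lemma \ref{gtosubseq} together with the uniform bound $\mathcal{I}(t_k,q_k)\le C$.

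First I would establish stability of the limit, $q_*\in\mathcal{S}(t_*)$. Fix an arbitrary test state $\tilde q\in\mathbb{Q}$ and invoke the mutual recovery sequence hypothesis (\ref{limsup}) to obtain $\{\tilde q_k\}\subset\mathbb{Q}$ with
\[
\limsup_{k\to\infty}\bigl(\mathcal{I}(t_k,\tilde q_k)+\mathcal{D}(q_k,\tilde q_k)\bigr)\le \mathcal{I}(t_*,\tilde q)+\mathcal{D}(q_*,\tilde q).
\]
Since $q_k\in\mathcal{S}(t_k)$, testing its stability against $\tilde q_k$ gives $\mathcal{I}(t_k,q_k)\le \mathcal{I}(t_k,\tilde q_k)+\mathcal{D}(q_k,\tilde q_k)$. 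To pass to the limit on the left I need the gpc-lower semicontinuity of $\mathcal{I}$. I would select a subsequence realizing $\liminf_k\mathcal{I}(t_k,q_k)$; along it the bound $\mathcal{I}\le C$ persists, so Lemma \ref{gtosubseq} produces a further subsequence that gpc-converges to $q_*$, and the assumed lower semicontinuity yields $\mathcal{I}(t_*,q_*)\le\liminf_{k\to\infty}\mathcal{I}(t_k,q_k)$. Chaining the three estimates,
\[
\mathcal{I}(t_*,q_*)\le\liminf_{k\to\infty}\mathcal{I}(t_k,q_k)\le\limsup_{k\to\infty}\bigl(\mathcal{I}(t_k,\tilde q_k)+\mathcal{D}(q_k,\tilde q_k)\bigr)\le\mathcal{I}(t_*,\tilde q)+\mathcal{D}(q_*,\tilde q),
\]
and since $\tilde q$ was arbitrary this is precisely $q_*\in\mathcal{S}(t_*)$.

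For the convergence of energies I would rerun the same scheme with the special choice $\tilde q=q_*$. The corresponding recovery sequence then satisfies $\limsup_k(\mathcal{I}(t_k,\tilde q_k)+\mathcal{D}(q_k,\tilde q_k))\le\mathcal{I}(t_*,q_*)$, because $\mathcal{D}(q_*,q_*)=0$ by property (iii) of $\mathcal{D}$. Combined with the stability of $q_k$ this gives $\limsup_k\mathcal{I}(t_k,q_k)\le\mathcal{I}(t_*,q_*)$, while the lower bound $\mathcal{I}(t_*,q_*)\le\liminf_k\mathcal{I}(t_k,q_k)$ was already obtained above. The two inequalities squeeze the sequence, so $\lim_{k\to\infty}\mathcal{I}(t_k,q_k)=\mathcal{I}(t_*,q_*)$.

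I expect the only real obstacle to be the subsequence bookkeeping in the second paragraph: lower semicontinuity is tied to gpc-convergence rather than to the weak convergence at hand, so the liminf must be realized along a subsequence and then refined through Lemma \ref{gtosubseq}, and one must check that the resulting bound controls the liminf of the full sequence and not merely that of the extracted sub-subsequence. The remainder is the abstract assembly, which uses only manipulations of the stability inequality and the identity $\mathcal{D}(q_*,q_*)=0$.
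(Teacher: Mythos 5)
Your proposal is correct and follows essentially the same route as the paper's proof: both rest on testing the stability of $q_k$ against the recovery sequence from (\ref{limsup}), using Lemma \ref{gtosubseq} together with the bound $\I(t_k,q_k)\le C$ to invoke gpc-lower semicontinuity (your explicit extraction of a liminf-realizing subsequence is exactly the right way to make the paper's terse ``pass to a subsequence'' rigorous), and choosing $\tilde q=q_*$ so that $\mathcal{D}(q_*,q_*)=0$ yields the $\limsup$ bound. The only cosmetic difference is the order (you prove stability of $q_*$ before energy convergence, the paper after), and that the paper routes the time dependence through the loading estimate (\ref{tStartk}) using (\ref{ass-f})--(\ref{ass-g}), whereas you absorb $t_k\to t_*$ directly into the lower-semicontinuity hypothesis, which the paper's preceding lemma indeed justifies.
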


\bigskip

{\it Proof.} We follow the proof of Prop.~4.3 in \cite{mami2}.
Take $\tilde q:=q_*$ in (\ref{limsup}), which yields a sequence $\{\tilde q_k\}_{k\in\N}$. Then we get, by the stability of $q_k$,
\be\label{Iusc}
\limsup_{k\to\infty}\I(t_k, q_k)\le \limsup_{k\to\infty}((\I(t_k,\tilde q_k)+\mathcal{D}(q_k,\tilde q_k))\le \I(t_*,\tilde q)+\mathcal{D}(q_*,\tilde q)= \I(t_*,q_*).
\ee

The assumptions (\ref{ass-f}) and (\ref{ass-g}) on $f$ and $g$ further give
\be\label{tStartk}
\lim_{k\to\infty}|\I(t_k,q_k)-\I(t_*,q_k)|=\lim_{k\to\infty}|L(t_k,q_k)-L(t_*,q_k)|=0.
\ee

Since $\I$ is lower semicontinuous with respect to gpc-convergence (and by Lemma \ref{gtosubseq} we can pass to a gpc-convergent subsequence, without relabeling it), we deduce by equation (\ref{tStartk}) that 
$$\liminf_{k\to\infty} \I(t_k,q_k) =\lim_{k\to\infty}(\I(t_k,q_k)-\I(t_*,q_k))+\liminf_{k\to\infty}\I(t_*,q_k)\ge\I(t_*,q_*).$$
This combined with (\ref{Iusc}) establishes the weak continuity along a stable sequence: $\I(t_k,q_k)\to\I(t_*,q_*)$.
In the end, pick a $\tilde q\in\mathbb{Q}$ and apply (\ref{limsup}) to it: 
$$
\I(t_*,q_*)=\lim_{k\to\infty}\I(t_k,q_k)\le\liminf_{k\to\infty}(\I(t_k,\tilde q_k)+\mathcal{D}(q_k,\tilde q_k))\le \I(t_*,\tilde q)+\mathcal{D}(q_*,\tilde q);$$
hence, the stability of $q_*$ is proved.
\hfill
$\Box$

A natural question is how to ensure the validity of (\ref{limsup}).
If $\mathcal{D}\colon\mathbb{Q}\times\mathbb{Q}\to [0,+\infty)$, i.e. no irreversibility constraint is imposed on plastic processes, then it is  sufficient if $D$ from (\ref{dist}) satisfies
\jz \be\label{nodi1}\begin{aligned}
&D\text{ is a Carathéodory mapping and}\\
&D(x,z_1,z_2)\le c(x)+ C(|F_{\p1}|^{\beta^*-\epsilon} + |F_{\p2}|^{\beta^*-\epsilon}+|p_1|^{\omega^*-\epsilon}+|p_2|^{\omega^*-\epsilon}),
\end{aligned}\ee\EEE
where $\epsilon>0$  is small enough and $\beta^*:=n\beta/(n-\beta)$ if $n>\beta$
and $\beta^*>1$ if $\beta\ge n$. Similarly, $\omega^*:=n\omega/(n-\omega)$ if $n>\omega$
and $\omega^*>1$ if $\omega\ge n$.  
Then the compact embedding provides the continuity of $\mathcal{D}$. This shows that (\ref{limsup}) is valid with a constant sequence $\tilde{q}_k=\tilde{q}$.

If $\mathcal{D}\colon\mathbb{Q}\times\mathbb{Q}\to [0,+\infty]$,  the assumptions are more elaborate. Following \cite{mami2} we impose the following {\it sufficient} conditions on $D$ from (\ref{nodist}):\\
\noindent
\jz (3.A) \EEE $D(x,\cdot,\cdot)\colon\mathbb{D}(x)\to[0,+\infty)$ is continuous, where
 $\mathbb{D}(x):=\{(z_1,z_2);\ D(x,z_1,z_2)<+\infty\}$,\\
\noindent
(3.B) For every $R>0$ there is $K>0$ such that for almost all $x\in\O$:
$D(x,z_1,z_2)<K$ if $z_1,z_2\in \mathbb{D}(x)$ and $|z_1|,|z_2|<R$, and \\
(3.C) The direction $v^*\in\R^m$ from (\ref{W2cont}) has the property that for all $\alpha,R>0$ there is $\rho>0$ such that  for almost every $x\in\O$ and every $z,z_0,z_1$:
$$ |z-z_0|<\rho \mbox{, }(z_0,z_1)\in \mathbb{D}(x)\mbox{ and }|z_0|,|z_1|<R\mbox{ implies } (z,z_1+(0,\alpha v^*))\in \mathbb{D}(x).$$

\bigskip

\begin{proposition}
Let $\beta,\omega>n$.
Let $D$ satisfy (3.A)--(3.C). Then (\ref{limsup}) holds.
\end{proposition}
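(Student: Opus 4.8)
My plan is to follow the proof of the analogous statement in \cite{mami2} and construct an explicit \emph{mutual recovery sequence} $\{\tilde q_k\}_{k\in\N}$ that differs from the given competitor $\tilde q=(\tilde y,\tilde F_{\rm p},\tilde p)$ only by a vanishing shift of the hardening variable along the privileged direction $v^*$ from \eqref{W2cont}. Concretely, I would take
\[
\tilde q_k:=(\tilde y,\tilde F_{\rm p},\tilde p+\alpha_k v^*),\qquad \alpha_k\downarrow 0,
\]
so that $\tilde q_k\in\mathbb{Q}$ and, crucially, the elastic arguments $\nabla\tilde y\,\tilde F_{\rm p}^{-1}$ and $\nabla[(\cof\nabla\tilde y)\tilde F_{\rm p}^{\top}]$ are untouched; by the separable structure \eqref{separable} the whole $\mathcal{W}_1$-contribution is then independent of $k$. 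I may clearly assume $\I(t_*,\tilde q)+\mathcal{D}(q_*,\tilde q)<+\infty$, since otherwise \eqref{limsup} is vacuous.

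First I would extract the compactness encoded in the hypothesis $\beta,\omega>n$: the Morrey embeddings $W^{1,\beta}(\O;\R^{n\times n})\hookrightarrow\hookrightarrow C(\overline\O)$ and $W^{1,\omega}(\O;\R^m)\hookrightarrow\hookrightarrow C(\overline\O)$ are compact, so the weak convergence $z_k\wto z_*$ in $\mathbb{Z}$ upgrades to \emph{uniform} convergence $z_k\to z_*$ in $C(\overline\O)$ of the whole sequence. In particular there is $R>0$ with $|z_k|,|\tilde z|\le R$ on $\overline\O$, and the finiteness $\mathcal{D}(q_*,\tilde q)<+\infty$ forces $(z_*(x),\tilde z(x))\in\mathbb{D}(x)$ for a.e.\ $x\in\O$. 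For the dissipation I would then apply (3.C) pointwise with $z=z_k(x)$, $z_0=z_*(x)$, $z_1=\tilde z(x)$ and $\alpha=\alpha_k$: once $\|z_k-z_*\|_{C(\overline\O)}<\rho(\alpha_k,R)$, it yields $(z_k(x),\tilde z_k(x))\in\mathbb{D}(x)$ for a.e.\ $x$. A diagonal choice of $\alpha_k\downarrow 0$ that reconciles $\rho(\alpha_k,R)$ with the rate of $\|z_k-z_*\|_{C(\overline\O)}\to0$ makes this hold for all large $k$. On such pairs (3.B) supplies a $k$-independent bound $D(x,z_k(x),\tilde z_k(x))<K$, while (3.A) gives the pointwise limit $D(x,z_k(x),\tilde z_k(x))\to D(x,z_*(x),\tilde z(x))$; dominated convergence then delivers $\mathcal{D}(q_k,\tilde q_k)\to\mathcal{D}(q_*,\tilde q)$.

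For the stored energy and loading I would argue as follows. The $\mathcal{W}_1$-integral is constant in $k$ and equals its value at $\tilde q$. The loading satisfies $L(t_k,\tilde y)\to L(t_*,\tilde y)$ by the time regularity \eqref{ass-f}--\eqref{ass-g} and the chosen data exponents. Finally, since $\nabla(\tilde p+\alpha_k v^*)=\nabla\tilde p$, the $\mathcal{W}_2$-integrand is $\mathcal{W}_2(\tilde F_{\rm p},\nabla\tilde F_{\rm p},\tilde p+\alpha_k v^*,\nabla\tilde p)$, which by \eqref{W2cont} converges pointwise to $\mathcal{W}_2(\tilde F_{\rm p},\nabla\tilde F_{\rm p},\tilde p,\nabla\tilde p)$ as $\hat{\omega}(\alpha_k)\to0$ and is dominated by a fixed $L^1(\O)$ function (finite because $\I(t_*,\tilde q)<+\infty$ and $\mathcal{W}_2$ is bounded below by \eqref{growth}). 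Dominated convergence gives $\I(t_k,\tilde q_k)\to\I(t_*,\tilde q)$. Adding the two limits shows that \eqref{limsup} in fact holds with equality in the limit, which is what Proposition \ref{limstab} requires.

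The main obstacle is the dissipation step: keeping $D(x,z_k(x),\tilde z_k(x))$ finite and uniformly bounded even though $D(x,\cdot,\cdot)$ is finite only on $\mathbb{D}(x)$ and $z_k$ is a priori only weakly convergent. The role of the $v^*$-shift is precisely to hold the competitor inside the finiteness domain along the whole sequence, and the delicate point is the simultaneous calibration of $\alpha_k\to0$ so that it is small enough for the continuity estimate \eqref{W2cont} yet large enough relative to $\|z_k-z_*\|_{C(\overline\O)}$ for (3.C) to be applicable; the uniform convergence granted by $\beta,\omega>n$ is exactly what makes such a choice possible.
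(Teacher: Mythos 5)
Your proposal is correct and takes essentially the same route as the paper's own proof: the identical mutual recovery sequence $\tilde q_k=(\tilde y,\tilde F_{\rm p},\tilde p+\alpha_k v^*)$, the compact embedding $W^{1,\beta},W^{1,\omega}\hookrightarrow\hookrightarrow C(\overline\O)$ for $\beta,\omega>n$ to upgrade weak to uniform convergence, conditions (3.A)--(3.C) combined with dominated convergence to get $\mathcal{D}(q_k,\tilde q_k)\to\mathcal{D}(q_*,\tilde q)$, and the properties of $\mathcal{W}_2$ and $L$ (in particular \eqref{W2cont}, \eqref{ass-f}, \eqref{ass-g}) to get $\I(t_k,\tilde q_k)\to\I(t_*,\tilde q)$, concluding that \eqref{limsup} holds with equality. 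Your write-up is in fact somewhat more explicit than the paper's (the diagonal calibration of $\alpha_k$ against $\rho(\alpha_k,R)$ and the $L^1$ domination of the shifted $\mathcal{W}_2$-integrand), but the underlying argument is the same.
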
  

\bigskip

{\it Proof.} The reasoning follows the lines of \cite{mami2}.
If $\mathcal{D}(q_*,\tilde q)=+\infty$ in (\ref{limsup}), the proof is finished.
So, we assume that $$\mathcal{D}(q_*,\tilde q)\in\R. $$
If $q_k\wto q_*$, we observe that 
\be\label{compPlast}
\rho_k:=\|F_{\p k}-F_{\p*}\|_{C(\bar\O;\R^{n\times n})}+\|p_k- p_*\|_{C(\bar\O;\R^m)}\to 0.
\ee
by the compact embedding. Then $|z_k|+|z_*| +|\tilde z|<R$ for some $R>0$ if $k$ is large enough. Define $\tilde z_k:= (\tilde F_\p,\tilde p+\alpha_k v^*)$ where $\alpha_k\to 0$ and  relates to $\rho_k$ 
as in (3.C) (we may need to redefine the $\rho_k$ from (\ref{compPlast}) by passing to a subsequence, which is without loss of generality). Thus, $(z_k,\tilde z_k)\in \mathbb{D}(x)$ a.e. in $\O$ and we have $|z_k|$, $|\tilde z_k| <R$. The continuity of $D$ gives the convergence of 
$D(x,z_k,\tilde z_k)\to D(x,z_*,\tilde z)$ pointwise so that $\mathcal{D}(q_k,\tilde q_k)\to\mathcal{D}(q_*,\tilde q)$ by condition (3.B)
and the dominated convergence theorem. Furthermore, properties of $\mathcal{W}_2$ and $L$ (assumptions (\ref{ass-f}), (\ref{ass-g})) imply that $\I(t_k,\tilde{q}_k)\to \I(t_*,\tilde{q}_*)$. Summing up, we deduce that (\ref{limsup}) is satisfied with equality.
 \hfill
 $\Box$

\subsection{Incremental problems}

Next, we define the following sequence of incremental problems. We consider a \jz {\it stable} \EEE  initial condition $q^0_\tau:=q^0\in\mathbb{Q}$.

Let us take $\tau>0$, a time step, chosen in the way that $N=T/\tau\in\N$.
For $1\le k\le N$, $t_k:=k\tau$,  find $q_\tau^k\in\mathbb{Q}$ such that 
 $q^k_\tau$ solves
\be\label{timestep}\left.\begin{array}{ll}
\mbox{minimize } & \I(t_k,q)+\mathcal{D}(q_\tau^{k-1},q)\\
\mbox{subject to } & q^k_\tau\in\mathbb{Q}.
 \end{array}\right\}\ \ee

\bigskip

\begin{proposition}\label{timestep-ex}
Let $\alpha^{-1}+\beta^{-1}\le d^{-1}< \jz (n-1)^{-1}\EEE$, $\jz d>\frac{\beta(n-1)}{\beta-1}\EEE$. Let the assumptions on $\mathcal{W}$  and $\mathcal{D}$ be satisfied. 
Let further (\ref{ass-f}) and (\ref{ass-g}) be satisfied. Then the problem (\ref{timestep}) has a solution for all $k=1,\ldots, T/\tau$. In addition, for the solution $q_\tau^k=(y_\tau^k,z_\tau^k)$ we get $\det\nabla y_\tau^k>0$ a.e. in $\O$.
\end{proposition}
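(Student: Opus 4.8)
The plan is to solve (\ref{timestep}) by the direct method of the calculus of variations applied to $q\mapsto\I(t_k,q)+\mathcal{D}(q_\tau^{k-1},q)$ over $\mathbb{Q}$. First I would check that the infimum is finite: the admissible competitor $q=q_\tau^{k-1}$ gives zero dissipation by property (iii) of $\mathcal{D}$, and $\I(t_k,q_\tau^{k-1})<+\infty$ follows by induction on $k$, since $q_\tau^{k-1}$ has finite energy and the time enters $\I$ only through the finite loading $L$. Then I would take a minimizing sequence $\{q_j\}=\{(y_j,F_{\p j},p_j)\}_{j\in\N}$; since $\mathcal{D}\ge0$, the energies $\I(t_k,q_j)$ are bounded. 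The growth bound (\ref{growth}) controls $F_{\p j}$ in $W^{1,\beta}$ and $p_j$ in $W^{1,\omega}$, and because $\beta,\omega>n$, Morrey's embedding yields uniform bounds in $C(\bar\O)$; as $F_{\p j}\in{\rm SL}(n)$, also $F_{\p j}^{-1}=(\cof F_{\p j})^\top$ is equibounded. Writing $\nabla y_j=F_{\e j}F_{\p j}$ and using the lower bound $c|F_{\e j}|^\alpha$ from (\ref{growth-graddet1}), the quantities $|\nabla y_j|$ and $|F_{\e j}|$ are comparable, so $\nabla y_j$ is bounded in $L^\alpha$; since $\alpha>d$ and $y_j=y_0$ on $\Gamma_0$, a Poincaré inequality bounds $y_j$ in $W^{1,d}$, the loading being absorbed by the superlinear term thanks to (\ref{ass-f})--(\ref{ass-g}) and $\alpha>1$.

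Passing to a subsequence I obtain $q_j\wto q_*=(y_*,F_{\p*},p_*)$ in $\mathbb{Q}$. The constraint $F_{\p*}\in{\rm SL}(n)$ a.e. survives the limit because $F_{\p j}\to F_{\p*}$ uniformly and $\det$ is continuous, while $y_*=y_0$ on $\Gamma_0$ follows from the weak continuity of the trace; hence $q_*\in\mathbb{Q}$.

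The technical core, which simultaneously establishes the deferred Lemma~\ref{gtosubseq}, is to upgrade $q_j\wto q_*$ to gpc-convergence $q_j\gto q_*$. From (\ref{growth-graddet1}) and the energy bound, $\nabla[(\cof\nabla y_j)F_{\p j}^\top]=\nabla\cof F_{\e j}$ is bounded in $L^{\alpha/(n-1)}$; combined with the $L^{\alpha/(n-1)}$ bound on $\cof F_{\e j}=(\cof\nabla y_j)F_{\p j}^\top$ this gives a bound in $W^{1,\alpha/(n-1)}$ (the exponent conditions $\alpha^{-1}+\beta^{-1}\le d^{-1}<(n-1)^{-1}$ and $d>\beta(n-1)/(\beta-1)$ are exactly what make the relevant products integrable and the target space reflexive). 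By Rellich--Kondrachov I may assume $\cof F_{\e j}\to M$ in $L^{\alpha/(n-1)}$ and a.e., with $\nabla\cof F_{\e j}\wto\nabla M$. To pass from the cofactor to the gradient itself I would use the algebraic identities $\cof(\cof F)=(\det F)^{n-2}F$ and $\det\cof F=(\det F)^{n-1}$: from $\cof F_{\e j}\to M$ a.e. one gets $\det F_{\e j}=(\det\cof F_{\e j})^{1/(n-1)}\to\delta_*$ and $\cof(\cof F_{\e j})\to\cof M$ a.e.; the uniform bound on $(\det F_{\e j})^{-s}$ together with Fatou keeps $\delta_*>0$ a.e., so the reconstruction $F_{\e j}=\cof(\cof F_{\e j})/(\det F_{\e j})^{n-2}\to\cof M/\delta_*^{\,n-2}$ a.e. Consequently $\nabla y_j=F_{\e j}F_{\p j}$ converges a.e., hence in measure on the finite-measure domain $\O$; the weak continuity of cofactors (equivalently, uniqueness of the weak and a.e. limits) identifies $M=\cof F_{\e*}$ and the a.e. limit as $\nabla y_*$. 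This is precisely $q_j\gto q_*$.

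It remains to conclude. Along the gpc-convergent subsequence, the lower semicontinuity of $\I$ with respect to gpc-convergence established above gives $\I(t_k,q_*)\le\liminf_{j\to\infty}\I(t_k,q_j)$, while (\ref{assonD1}) yields $\mathcal{D}(q_\tau^{k-1},q_*)\le\liminf_{j\to\infty}\mathcal{D}(q_\tau^{k-1},q_j)$. Adding the two inequalities shows that $q_*$ attains the infimum and therefore solves (\ref{timestep}). Finally, the minimizer has finite energy, so (\ref{growth-graddet1}) forces $\det F_\e>0$ a.e.; since $F_\p\in{\rm SL}(n)$ we obtain $\det\nabla y_\tau^k=\det F_\e\,\det F_\p=\det F_\e>0$ a.e., as claimed. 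I expect the main obstacle to be the middle step --- extracting convergence in measure of the gradients from mere weak convergence --- which crucially relies on the coercive control of $\nabla\cof\nabla y$ in (\ref{growth-graddet1}) and on the one-sided determinant bound $(\det F)^{-s}$, without which the cofactor could degenerate and the reconstruction would fail.
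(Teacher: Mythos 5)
Your overall strategy --- direct method, coercivity bounds, upgrading weak convergence of a minimizing sequence to gpc-convergence via the $W^{1,\alpha/(n-1)}$ bound on cofactors, pointwise algebraic reconstruction of $F_{\e j}$ from $\cof F_{\e j}$ using positivity of the limit determinant, then lower semicontinuity of $\I$ (Fonseca--Leoni) and of $\mathcal{D}$ --- is exactly the paper's. There is, however, one genuine flaw in the \emph{order} of your a priori estimates. You first claim that \eqref{growth} bounds $F_{\p j}$ in $W^{1,\beta}(\O;\R^{n\times n})$ and $p_j$ in $W^{1,\omega}(\O;\R^m)$, then use Morrey to get $\|F_{\p j}\|_{C(\bar\O;\R^{n\times n})}\le C$, and only then bound $\nabla y_j$. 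But the bound $\I(t_k,q_j)\le C_0$ only gives $\int_\O\mathcal{W}_2\,\md x\le C_0+L(t_k,y_j)\le C_0+C\|y_j\|_{W^{1,d}(\O;\R^n)}$: the loading term is not sign-definite and is controlled only by the as-yet-unbounded deformation. So your $z_j$ bound needs the $y_j$ bound, while your $y_j$ bound (via $|\nabla y_j|\le\|F_{\p j}\|_{C(\bar\O;\R^{n\times n})}|F_{\e j}|$) needs the $z_j$ bound --- a circle. Your remark that ``the loading is absorbed by the superlinear term'' is the right idea, but invoked at the wrong place it cannot break this circle. The repair is precisely what Lemma \ref{Yconseq} is for: estimate $\|\nabla y_j\|_{L^d}\le C\|F_{\e j}\|_{L^\alpha}\|F_{\p j}\|_{L^\beta}$ by H\"{o}lder's inequality (this uses $\alpha^{-1}+\beta^{-1}\le d^{-1}$), then split the product by the reverse Young inequality (or a Young inequality with small parameter, using $\alpha^{-1}+\beta^{-1}<1$) so that \emph{both} factors are absorbed simultaneously into the coercive terms $c\|F_{\e j}\|^\alpha_{L^\alpha}$ and $c\|F_{\p j}\|^\beta_{W^{1,\beta}}$ supplied by \eqref{growth-graddet1} and \eqref{growth}; only after this joint absorption do the bounds you list become available, and then Morrey, Poincar\'{e}, etc.\ proceed as you wrote.

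Apart from this, your argument is correct and in substance coincides with the paper's proof; the remaining differences are cosmetic. For the pointwise recovery of $F_{\e j}$ you use the identities $\cof(\cof F)=(\det F)^{n-2}F$ and $\det\cof F=(\det F)^{n-1}$, while the paper uses Cramer's rule $\cof[\nabla y_j(F_\p^j)^{-1}]/\det\nabla y_j=(\nabla y_j(F_\p^j)^{-1})^{-\top}$ and then inverts; this is the same algebra, and both versions hinge, as you correctly stress, on $\int_\O(\det F_{\e j})^{-s}\,\md x\le C$ plus Fatou to keep the limit determinant positive a.e. Likewise, you identify limits by combining a.e.\ convergence (from Rellich on $W^{1,\alpha/(n-1)}$) with uniqueness of weak and a.e.\ limits, whereas the paper first identifies the weak $W^{1,\alpha/(n-1)}$-limit of $(\cof\nabla y_j)F_\p^{j\top}$ through the weak continuity of minors ($\cof\nabla y_j\wto\cof\nabla y_*$ in $L^{d/(n-1)}$, which uses $d>n-1$) together with the uniform convergence of $F_{\p j}$; both identifications are legitimate. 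With the a priori estimate repaired as above, your proof goes through, including the final observation that finiteness of the minimizer's energy and $\det F_\p=1$ give $\det\nabla y_\tau^k>0$ a.e.
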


\bigskip

{\it Proof.}
Given $q_\tau^{k-1}\in \mathbb{Q}$ from the previous time step, suppose that $\{q_j\}\subset \mathbb{Q}$ is a minimizing sequence  for $q\mapsto\I(t_k,q)+\mathcal{D}(q_\tau^{k-1},q)$. 
The assumption (\ref{growth}) implies that $\{z_j\}$ is uniformly bounded in 
$W^{1,\beta}(\O;\R^{n\times n})\times W^{1,\omega}(\O;\R^m)$. 
Hence, as $\beta,\omega>1$ we can extract a weakly converging subsequence (not relabeled) $z_j\wto z$ in $W^{1,\beta}(\O;\R^{n\times n})\times W^{1,\omega}(\O;\R^m)$. The strong convergence of $z_j\to z:=(F_\p,p)$ in $L^\beta(\O;\R^{n\times n})\times L^\omega(\O;\R^m)$ ensures that $F_\p(x)\in {\rm SL}(n)$ almost everywhere. \jz Write $z_j=(F_\p^j,p^j)$, $q_j=(y^j,z_j)$. \EEE
Exploiting the submultiplicativity of the Euclidean norm, we estimate using Lemma~\ref{Yconseq}  
\begin{align}
\int_\O |\nabla y^j(x)(F_\p^j)^{-1}|^\alpha\,\md x&\ge\frac{ \|\nabla y^j\|_{L^d(\O;\R^{n\times n})}^\alpha}{\|F^j_\p\|_{L^\beta(\O;\R^{n\times n})}^\alpha}\nonumber\\
&\ge \jz\frac{\alpha}{d}\delta^{\alpha/(\alpha-d)}\|\nabla y^j\|_{L^d(\O;\R^{n\times n})}^d \EEE-\frac{\alpha-d}{d}\delta^{\alpha^2/(\alpha-d)^2}\|F^j_\p\|_{L^\beta(\O;\R^{n\times n})}^\beta.
\end{align}

The $L^d$-term on the right hand side is bounded due to (\ref{growth}) and the boundedness of $\{y^j\}$ in $W^{1,d}(\O;\R^n)$ follows by the Poincar\'{e} inequality if $\delta>0$ is taken small. Hence $y^j\wto y$ in $W^{1,d}(\O;\R^n)$ (up to a subsequence).  This then also implies that $\cof\nabla y^j\wto\cof\nabla y$ in $L^{d/(n-1)}(\O;\R^{n\times n})$; cf. \cite{ciarlet}.
Due to reflexivity of $W^{1,\alpha/(n-1)}(\O;\R^{n\times n})$ we get for a non-relabelled subsequence that 
$(\cof\nabla y^j )F^{j\top}_\p\wto \jz \,\Xi\EEE$ in  $W^{1,\alpha/(n-1)}(\O;\R^{n\times n})$ \jz for some $\Xi$ \EEE. However, the weak convergence of $\{\cof\nabla y^j\}_j$  and the strong convergence of 
$\{F_\p^j\}$ allow us to identify $\jz \Xi \EEE=(\cof\nabla y)F^\top_\p$. Moreover, the growth condition \eqref{growth-graddet1} implies that $\det\nabla y>0$ a.e. in $\Omega$ (see \cite{bbmkas}).
Cramer's rule together with $\det F_\p^j=1$ and $\det^{n-1}(\nabla y^j(F^j_\p)^{-1}) =\det(\cof\jz[\nabla y^j(F^j_\p)^{-1}]\EEE)= \det(\cof\nabla y^j)$ \jz give \EEE
$$
\frac{\cof[\nabla y^j (F_\p^j)^{-1}]}{\det\nabla y^j}=(\nabla y^j (F_\p^j)^{-1})^{-\top}$$
and the transpose of the left-hand side converges pointwise (for a subsequence again)  to $(\nabla y (F_\p)^{-1})^{-1}$ a.e. in $\O$. This, together with the fact that $\det\nabla y>0$ a.e.,  implies 
that $\nabla y^j (F^j_\p)^{-1}\to \nabla y (F_\p)^{-1}$ a.e. in $\O$. Then \jz by\cite[Corollary 7.9]{FL} we see \EEE that $\mathcal{I}$ is weakly lower semicontinuous.

The assumptions on $\mathcal{D}$ ensure that it is sequentially weakly lower semicontinuous on $\mathbb{Q}$, too. By the direct method of the calculus of variations, we conclude that a minimizer $q_\tau^k$ exists.
\hfill
$\Box$
\begin{remark}
Note that in fact, we proved that a minimizing sequence has a gpc-convergent subsequence.
\end{remark}

\subsection{Interpolation in time}

 We denote by $q_\tau$ a piecewise constant interpolation of $q_\tau^k=:(y_\tau^k,z^k_\tau)$, i.e. $q_\tau(t)=q_\tau^k$ if $t\in [k\tau,(k+1)\tau)$ and $k=0,\ldots, T/\tau-1$. Finally, $q_\tau(T):=q_\tau^N$. Analogously,
 $L_\tau(t,q_\tau(k\tau)):=L(k\tau,q_\tau(k\tau))$ is a piecewise constant interpolation of $L$ and $\I_\tau(t,q_\tau(k\tau)):=
 \I(k\tau,q_\tau(k\tau))$ is a piecewise constant  interpolation of $\I$.
 \begin{proposition}
  \label{prop:3}
  Under the assumptions of Proposition \ref{timestep-ex}, 
  problem~\eqref{timestep} has a solution $q_\tau(t)$ which is stable,
  i.e., for all $t \in [0,T]$ and for every $q \in \mathbb{Q}$,
  \begin{equation}
    \label{eq:stab}
    \G_\tau (t, q_\tau(t)) \le 
    \G_\tau(t, q) + \D \left(q_\tau(t), q \right).
  \end{equation}
  Moreover, for all $t_I \le t_{I\!I}$ from the set $\{k \tau\}_{k =
    0}^\NN$, the following discrete energy inequalities hold if one
  extends the definition of $q_\tau(t)$ by setting $q_\tau(t) \defeq
  q^0$ if $t < 0$:
  \begin{multline}
    \label{eq:energy}
    - \int_{t_I}^{t_{I\!I}} \dot L \left(t, q_\tau(t-\tau) \right) \d t \le
    \G \left(t_{I\!I}, q_\tau(t_{I\!I}) \right) 
    + {\rm Var}\left(\mathcal{D},q_\tau; \left[t_I, t_{I\!I} \right] \right) 
     - \G \left(t_I, q_\tau(t_I) \right) \\ 
     \le -\int_{t_I}^{t_{I\!I}} \dot L \left(t, q_\tau (t) \right) \d
    t. 
  \end{multline}
\end{proposition}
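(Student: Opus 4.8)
The plan is to take the incremental minimizers $q_\tau^k$, whose existence for $k=1,\dots,N$ is guaranteed by Proposition \ref{timestep-ex}, and to read off the two assertions from the minimality and stability of these $q_\tau^k$ together with the fact that $\I$ depends on time only through the loading. The interpolant $q_\tau(t)$ assembled from the $q_\tau^k$ is then the claimed stable solution, and the whole proof reduces to algebra on the minimizers plus careful bookkeeping of the time grid.

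For stability \eqref{eq:stab}: since $q_\tau^k$ minimizes $q\mapsto\I(t_k,q)+\mathcal{D}(q_\tau^{k-1},q)$, for every competitor $q\in\mathbb{Q}$ one has $\I(t_k,q_\tau^k)+\mathcal{D}(q_\tau^{k-1},q_\tau^k)\le\I(t_k,q)+\mathcal{D}(q_\tau^{k-1},q)$. Inserting the triangle inequality (assumption (iv)) $\mathcal{D}(q_\tau^{k-1},q)\le\mathcal{D}(q_\tau^{k-1},q_\tau^k)+\mathcal{D}(q_\tau^k,q)$ and cancelling the finite term $\mathcal{D}(q_\tau^{k-1},q_\tau^k)$ yields $\I(t_k,q_\tau^k)\le\I(t_k,q)+\mathcal{D}(q_\tau^k,q)$, i.e.\ stability at the node $t_k$. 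Because $\I_\tau(t,\cdot)=\I(k\tau,\cdot)$ and $q_\tau(t)=q_\tau^k$ are both constant on $[k\tau,(k+1)\tau)$, this passes verbatim to every $t$ and gives \eqref{eq:stab}. Finiteness of the cancelled term is obtained inductively: the initial datum $q^0$ is stable with finite energy, and each incremental minimum is bounded by its value at the previous state, so $\I(t_k,q_\tau^k)$ and $\mathcal{D}(q_\tau^{k-1},q_\tau^k)$ are finite for all $k$.

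For the two-sided energy inequality \eqref{eq:energy}, the structural point is that $\I(t,q)=\int_\O\mathcal{W}(\cdots)\,\md x-L(t,q)$ depends on $t$ only through $L$, so for fixed $q$ one has $\I(t_k,q)-\I(t_{k-1},q)=-\int_{t_{k-1}}^{t_k}\dot L(\xi,q)\,\md\xi$, with $\dot L$ well defined and integrable in time by \eqref{ass-f}--\eqref{ass-g}. I would then establish two per-step estimates on $[t_{k-1},t_k]$. The upper one comes from testing the minimality of $q_\tau^k$ with the competitor $q_\tau^{k-1}$: $\I(t_k,q_\tau^k)+\mathcal{D}(q_\tau^{k-1},q_\tau^k)\le\I(t_k,q_\tau^{k-1})$, hence $[\I(t_k,q_\tau^k)-\I(t_{k-1},q_\tau^{k-1})]+\mathcal{D}(q_\tau^{k-1},q_\tau^k)\le-\int_{t_{k-1}}^{t_k}\dot L(\xi,q_\tau^{k-1})\,\md\xi$. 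The lower one comes from the stability of $q_\tau^{k-1}$ (just proven) tested with the competitor $q_\tau^k$: $-\int_{t_{k-1}}^{t_k}\dot L(\xi,q_\tau^k)\,\md\xi\le[\I(t_k,q_\tau^k)-\I(t_{k-1},q_\tau^{k-1})]+\mathcal{D}(q_\tau^{k-1},q_\tau^k)$. Summing over the steps between $t_I$ and $t_{I\!I}$ telescopes the energy differences to $\G(t_{I\!I},q_\tau(t_{I\!I}))-\G(t_I,q_\tau(t_I))$; the jump dissipations add up to ${\rm Var}(\mathcal{D},q_\tau;[t_I,t_{I\!I}])$, since the triangle inequality and $\mathcal{D}(z,z)=0$ make the partition through the grid nodes optimal in the defining supremum; and the loading integrals combine into $-\int_{t_I}^{t_{I\!I}}\dot L$ with the frozen state equal to the incoming, resp.\ outgoing, incremental value.

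The step I expect to be most delicate is the final bookkeeping: re-expressing those frozen states through the interpolant so that the upper integrand becomes exactly $\dot L(t,q_\tau(t))$ and the lower one $\dot L(t,q_\tau(t-\tau))$, under the convention $q_\tau\equiv q_\tau^k$ on $[k\tau,(k+1)\tau)$ and the extension $q_\tau\equiv q^0$ for $t<0$ — in particular, keeping the one-step index shift consistent on the boundary steps, where the extension is invoked, and making sure the ``old-state'' and ``new-state'' integrals land on the correct side of the sandwich. The remaining ingredients — finiteness of every term so the rearrangements are legitimate, integrability of $\dot L$, and the identity ${\rm Var}(\mathcal{D},q_\tau;[t_I,t_{I\!I}])=\sum_k\mathcal{D}(q_\tau^{k-1},q_\tau^k)$ — are routine consequences of the growth hypotheses \eqref{growth-graddet1}, \eqref{growth}, the loading regularity \eqref{ass-f}--\eqref{ass-g}, and the assumptions on $\mathcal{D}$.
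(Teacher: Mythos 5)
Your proposal is correct and takes essentially the same route as the paper: stability at the grid nodes from incremental minimality plus the triangle inequality for $\mathcal{D}$, and the two-sided energy estimate from the two per-step tests (minimality of $q_\tau^k$ against $q_\tau^{k-1}$ for the upper bound, stability of $q_\tau^{k-1}$ against $q_\tau^k$ for the lower bound) followed by telescoping; your finiteness remark is a small but genuine improvement on the paper, which cancels $\mathcal{D}(q_\tau^{k-1},q_\tau^k)$ silently. The one step you flag as delicate is precisely where the paper itself is loose: under its forward convention $q_\tau(t)=q_\tau^k$ on $[k\tau,(k+1)\tau)$, the new-state integrand you derive is $\dot L(t,q_\tau(t+\tau))$, not $\dot L(t,q_\tau(t-\tau))$ as written in \eqref{eq:energy} (the stated form matches the left-continuous interpolant $q_\tau(t)=q_\tau^k$ on $((k-1)\tau,k\tau]$, and then the two integrals should sit on the opposite sides of the sandwich), so this off-by-one is an inconsistency in the proposition's notation rather than a gap in your argument, whose per-step inequalities coincide exactly with the paper's.
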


{\it Proof.} In  Proposition~\ref{timestep-ex} we proved the existence of a solution to~\eqref{timestep}.

To show the stability estimate~\eqref{eq:stab}, we use the minimizing
property of $q_\tau^k$ and a triangle inequality for $\D$. Indeed, since $q_\tau^k$ is a minimizer,
\begin{equation}
  \label{eq:17}
  \G \left(k \tau, q^k_\tau \right) + \D \left(q^{k-1}_\tau, q^k_\tau
  \right)  
  \le \G \left(k \tau, q \right) + \D \left(q^{k-1}_\tau,
  q \right),  
\end{equation}
from which we infer that
\begin{equation*}
   \G \left(k \tau, q^k_\tau \right)\le \G \left(k \tau, q \right) +
   \D \left(q^{k-1}_\tau, q \right)-\D \left(q^{k-1}_\tau, q^k_\tau
  \right).
\end{equation*}
However, the structure of the metric 
implies that 
\begin{equation*}
  \D \left(q^{k-1}_\tau, q \right) - \D \left(q^{k-1}_\tau,
  q^k_\tau \right) \le \D \left(q^{k}_\tau, q \right),
\end{equation*}
from which~\eqref{eq:stab} follows.

The next step is to verify energy inequality~\eqref{eq:energy}, following the thoughts of \cite{AMFTVL}.  Testing the stability of $q^{k-1}_\tau$ with $q \defeq q^{k}_\tau$, we get
\begin{align}\label{inequ}
  \G \left((k-1) \tau, q^{k-1}_\tau \right) & \le 
  \G \left((k-1) \tau, q^{k}_\tau \right) + \D \left( q^{k-1}_\tau, q^{k}_\tau
   \right)  \\
  &= \G \left(k \tau, q^{k}_\tau \right) + L \left(k \tau, q^{k}_\tau
  \right) - L \left((k-1) \tau, q^{k}_\tau \right)+ \D
  \left( q^{k-1}_\tau, q^{k}_\tau \right).\nonumber 
\end{align}
For nonnegative integers $k_1$, $k_2$ with $k_1 \le k_2 \le \NN$, let $t_I = k_1 \tau$ and $t_{I\!I} = k_2 \tau$. Summing ~\eqref{inequ} over $k = k_1 + 1,\dots, k_2$ gives
\begin{eqnarray}
  \label{eq:energy0}
  \sum_{k = k_1 + 1}^{k_2} 
  \left[ L \left((k-1) \tau, q^{k}_\tau \right) 
    - L \left(k \tau, q^{k}_\tau \right) \right]& \le&
  \G \left(k_2 \tau, q^{k_2}_\tau \right) 
  - \G \left(k_1 \tau, q^{k_1}_\tau \right)\\
& +&
  \sum_{k = k_1 + 1}^{k_2} \D \left(q^{k-1}_\tau, q^{k}_\tau \right).\nonumber 
\ee
Replacing $q_\tau^{k_1}$, $q_\tau^{k_2}$ with $q_\tau$ evaluated at suitable time points, we discover the first inequality in~\eqref{eq:energy},
\begin{align*}
  - \int_{t_I}^{t_{I\!I}} \dot L \left(t, q_\tau(t-\tau) \right) \d t
  & \le
  \G \left(k_2 \tau, q^{k_2}_\tau \right) 
  - \G \left(k_1 \tau, q^{k_1}_\tau \right)
  + \sum_{k = k_1 + 1}^{k_2} \D \left(q^{k-1}_\tau, q^{k}_\tau \right) \\
  & = \G \left(t_{I\!I}, q^{k_2}_\tau \right) 
      - \G \left(t_I, q^{k_1}_\tau \right)
      + {\rm Var} \left(\mathcal{D},q_\tau; \left[t_I, t_{I\!I} \right] \right) 
\end{align*}
(for a step function, calculating ${\rm Var} \left(\mathcal{D},q_\tau; \left[t_I, t_{I\!I} \right]\right)$ is easy). The proof of the second inequality in~\eqref{eq:energy} is similar. Starting from the minimality of $q^{k}_\tau$, when compared with $q^{k-1}_\tau$ in~\eqref{eq:17}, yields
\begin{multline*}
  \G \left(k \tau, q^{k}_\tau \right) 
  + \D \left(q^{k-1}_\tau, q^{k}_\tau \right) \le 
  \G \left(k \tau, q^{k-1}_\tau \right) \\ 
  = \G \left((k-1) \tau, q^{k-1}_\tau \right) 
  + L \left((k-1) \tau, q^{k-1}_\tau \right) 
  - L \left(k \tau, q^{k-1}_\tau \right).
\end{multline*}
We sum again over $k = k_1 + 1,\dots, k_2$ to obtain
\begin{eqnarray*}
  \G \left(k_2 \tau, q^{k_2}_\tau \right) 
 & -& \G \left(k_1 \tau, q^{k_1}_\tau \right) 
  + \sum_{k = k_1 + 1}^{k_2} \D \left(q^{k-1}_\tau, q^{k}_\tau \right)\\
 & \le& \sum_{k = k_1 + 1}^{k_2} \left[L \left((k-1) \tau, q^{k - 1}_\tau
  \right)
 - L \left(k\tau, q^{k - 1}_\tau \right) \right],
\end{eqnarray*}
so we find
\begin{equation*}
  \G \left(k_2 \tau, q^{k_2}_\tau \right) 
  - \G \left(k_1 \tau, q^{k_1}_\tau \right)
  +  {\rm Var} \left(\mathcal{D},q_\tau; \left[t_I, t_{I\!I} \right] \right) 
  \le - \int_{t_I}^{t_{I\!I}} \dot L \left(t, q_\tau(t) \right) \d t  
\end{equation*} 
and the second inequality in~\eqref{eq:energy} is shown.
\hfill
$\Box$



We would like to pass with the step size $\tau$ to zero and for this we need certain \emph{a priori} bounds.
\begin{proposition}
  \label{prop:4}
 Let (\ref{ass-f}) and (\ref{ass-g}) be satisfied.
Then there is $\kappa \in \R$ such that for any $\tau>0$:
\begin{equation}
  \label{first}
  \norm{y_\tau}_{L^\infty \left(0,T; W^{1,d}(\O;\R^n) 
  \right)} < \kappa ,
\end{equation}
\begin{equation}
  \label{second}
  {\rm Var}(\mathcal{D},q_\tau;[0,T])) < \kappa, 
\end{equation}  
\begin{equation}
\label{fourth}
\norm{z_\tau}_{L^\infty \left(0,T; W^{1,\alpha}(\O;\R^{n\times n})\times  W^{1,\beta}(\O;\R^{m}) 
  \right)} < \kappa.
\end{equation}
\end{proposition}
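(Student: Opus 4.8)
The plan is to turn the discrete upper energy estimate \eqref{eq:energy} from Proposition~\ref{prop:3} into a Grönwall inequality for the stored energy. Introduce the (time-independent) stored-energy part $\mathcal{E}(q):=\I(t,q)+L(t,q)=\int_\O\mathcal{W}(\nabla yF_\p^{-1},\dots)\,\md x$. Taking $t_I=0$ and $t_{I\!I}=t$ a grid point in the second inequality of \eqref{eq:energy} and using $\I=\mathcal{E}-L$, one gets
\begin{equation*}
\mathcal{E}(q_\tau(t))+{\rm Var}(\mathcal{D},q_\tau;[0,t])\le \mathcal{E}(q^0)-L(0,q^0)+L(t,q_\tau(t))-\int_0^t\dot L(s,q_\tau(s))\,\md s ,
\end{equation*}
where $\mathcal{E}(q^0)<+\infty$ since the initial datum is admissible and stable. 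As $q_\tau$ is piecewise constant, it suffices to control the right-hand side at grid points.

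First I would estimate the loads. By \eqref{loading1} and Hölder's inequality, $|L(t,q_\tau(t))|\le \|f(t)\|_{L^{\tilde d}}\|y_\tau(t)\|_{L^{\tilde d'}}+\|g(t)\|_{L^{\hat d}(\Gamma_1)}\|y_\tau(t)\|_{L^{\hat d'}(\Gamma_1)}$, and the exponents in \eqref{ass-f}--\eqref{ass-g} are chosen precisely so that the Sobolev embedding $W^{1,d}(\O)\hookrightarrow L^{\tilde d'}(\O)$ and the trace embedding $W^{1,d}(\O)\hookrightarrow L^{\hat d'}(\Gamma_1)$ hold. Hence $|L(t,q_\tau(t))|\le C_L\|y_\tau(t)\|_{W^{1,d}}$ with $C_L:=C\sup_{[0,T]}(\|f\|_{L^{\tilde d}}+\|g\|_{L^{\hat d}(\Gamma_1)})$, and the same bound with $\dot f,\dot g$ (finite since $f,g\in C^1$) gives $|\dot L(s,q_\tau(s))|\le C_L\|y_\tau(s)\|_{W^{1,d}}$.

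Next comes coercivity, exactly as in Proposition~\ref{timestep-ex}. By submultiplicativity, the hypothesis $\alpha^{-1}+\beta^{-1}\le d^{-1}$ and Lemma~\ref{Yconseq} with $r=\alpha/d>1$, for every $\delta>0$,
\begin{equation*}
\int_\O|\nabla y_\tau(t)F_\p^{-1}|^\alpha\,\md x\ge \tfrac{\alpha}{d}\delta^{\alpha/(\alpha-d)}\|\nabla y_\tau(t)\|_{L^d}^d-\tfrac{\alpha-d}{d}\delta^{\alpha^2/(\alpha-d)^2}\|F_\p(t)\|_{L^\beta}^{\alpha d/(\alpha-d)} .
\end{equation*}
Combining with \eqref{growth-graddet1} and the lower bound in \eqref{growth}, choosing $\delta$ small so the negative $\|F_\p\|_{L^\beta}$ remainder is absorbed by the $\|F_\p\|_{L^\beta}^\beta$ term of $\mathcal{W}_2$ (after one more Young step if the exponents differ), and invoking the Poincaré inequality with the Dirichlet datum on $\Gamma_0$, one obtains
\begin{equation*}
\mathcal{E}(q_\tau(t))\ge c_0\big(\|y_\tau(t)\|_{W^{1,d}}^d+\|F_\p(t)\|_{W^{1,\beta}}^\beta+\|p(t)\|_{W^{1,\omega}}^\omega\big)-C_0 .
\end{equation*}

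Finally I would close the estimate. In $L(t,q_\tau(t))\le C_L\|y_\tau(t)\|_{W^{1,d}}$ I use Young's inequality (here $d>1$ is essential) to write $C_L\|y_\tau(t)\|_{W^{1,d}}\le\epsilon\|y_\tau(t)\|_{W^{1,d}}^d+C_\epsilon$ and absorb the first term into $c_0\|y_\tau(t)\|_{W^{1,d}}^d$ on the left. For the time integral I use $\|y_\tau(s)\|_{W^{1,d}}\le 1+\|y_\tau(s)\|_{W^{1,d}}^d\le 1+c_0^{-1}(\mathcal{E}(q_\tau(s))+C_0)$, which turns the energy inequality into $\mathcal{E}(q_\tau(t))\le C+C\int_0^t\mathcal{E}(q_\tau(s))\,\md s$; Grönwall's lemma then bounds $\sup_t\mathcal{E}(q_\tau(t))$ uniformly in $\tau$. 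Coercivity converts this into \eqref{first} and the $z$-bound \eqref{fourth}, and since $\mathcal{E}\ge-C_0$ while the right-hand side of the energy inequality is now bounded, the remaining ${\rm Var}(\mathcal{D},q_\tau;[0,T])$ is controlled, yielding \eqref{second}. The main obstacle is the coercivity step: because $\mathcal{W}_1$ controls $\nabla y\,F_\p^{-1}$ rather than $\nabla y$ itself, one must decouple the two factors through Lemma~\ref{Yconseq} and absorb the $\|F_\p\|_{L^\beta}$ remainder, which is exactly where $\alpha>d$ (so $r>1$) and $\alpha^{-1}+\beta^{-1}\le d^{-1}$ (so the Hölder split is admissible) enter; the subsequent Grönwall absorption additionally relies on $d>1$.
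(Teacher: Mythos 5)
Your proposal is correct and follows the same overall strategy as the paper's proof: both start from the second (upper) discrete energy inequality of Proposition~\ref{prop:3} with $t_I=0$, both isolate the purely stored-energy part ($V(q)=\mathcal{I}(t,q)+L(t,q)$ in the paper's notation, your $\mathcal{E}$), both establish the coercivity bound \eqref{lowerbound} exactly as in Proposition~\ref{timestep-ex} via Lemma~\ref{Yconseq} and the H\"{o}lder split guaranteed by $\alpha^{-1}+\beta^{-1}\le d^{-1}$, and both control $L$ and $\dot L$ linearly in $\|y_\tau\|_{W^{1,d}}$ through the Sobolev and trace embeddings encoded in \eqref{ass-f}--\eqref{ass-g}. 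The one genuine divergence is the closing absorption: the paper sets $Y_\tau^d:=\max_{\ell}\|y_\tau^\ell\|^d_{W^{1,d}}$ and concludes directly from the fact that the left-hand side is superlinear ($Y_\tau^d$, $d>1$) while the right-hand side is merely linear in $Y_\tau$, whereas you absorb the pointwise load term by Young's inequality and then run Gr\"{o}nwall on the piecewise-constant energy $s\mapsto\mathcal{E}(q_\tau(s))$. Both closures are valid and uniform in $\tau$; the paper's is marginally more elementary (no Gr\"{o}nwall, and a constant algebraic rather than exponential in $T$), while your route generalizes more readily if the loading were to depend superlinearly on lower-order quantities. Two details in your write-up are actually sharper than the paper's text: you record the correct exponent $\alpha d/(\alpha-d)$ produced by Lemma~\ref{Yconseq} and observe that $\alpha^{-1}+\beta^{-1}\le d^{-1}$ is precisely what makes $\alpha d/(\alpha-d)\le\beta$ so the remainder can be absorbed into the $|F_{\rm p}|^\beta$ term of \eqref{growth} (the paper silently writes the exponent $\beta$), and you obtain \eqref{second} explicitly by keeping ${\rm Var}(\mathcal{D},q_\tau;[0,t])$ on the left until the right-hand side is bounded, where the paper defers to algebraic manipulations in a cited reference.
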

 
{\it Proof.} For $q=(y, F_\p,p)\in \mathbb{Q}$, set  
$$ V(q)= \int_\O \jz\mathcal{W}(\nabla y(x)F_\p^{-1},\nabla[(\cof\nabla y(x))F_\p^\top(x)],F_\p(x),\nabla F_\p,p(x),\nabla p(x))\,\md x\EEE. $$
Like in the proof of Proposition \ref{timestep-ex}, we can show by the growth conditions on $\mathcal{W}$ that for any $q$ with $V(q)<+\infty$ and some constants $C_0$, $C>0$,
\be\label{lowerbound} -C_0+C\left(\|y\|^d_{W^{1,d}(\O;\R^n)}+ \|F_\p\|^\beta_{W^{1,\beta}(\O;\R^{n\times n})}+\|p\|^\omega_{W^{1,\omega}(\O;\R^m)}\right)\le V(q).\ee 
Using a lower energy estimate from the proof of (\ref{eq:energy}) for $k_1=0$
we get 
\begin{equation*}
V(q^{k_2}_\tau)-L(k_2\tau,q^{k_2}_\tau)-V(q^0_\tau)+L(0,q^{0}_\tau)\le 
\sum_{k=1}^{k_2} \left[L \left((k-1) \tau, q^{k - 1}_\tau
  \right) -L \left(k\tau, q^{k - 1}_\tau \right) \right].
\end{equation*}
Hence for a constant $C$ which only depends on the data and the initial condition, 
\be\label{basic}
V(q^{k_2}_\tau)\le\sum_{k=1}^{k_2} \left[L \left((k-1) \tau, q^{k - 1}_\tau
  \right) -L \left(k\tau, q^{k - 1}_\tau \right) \right] +L(k_2\tau,q^{k_2}_\tau)+C.
\ee
So, denoting $Y_\tau^d:=\max_{1\le\ell\le T/\tau} \|y^{\ell}_\tau\|^d_{W^{1,d}(\O;\R^n)}$ and taking into account (\ref{lowerbound}) with $q:=q^{k_2}_\tau$ we have 
\be
Y_\tau^d\le C\sum_{k=1}^{k_2} \left[L \left((k-1) \tau, q^{k - 1}_\tau
  \right) -L \left(k\tau, q^{k - 1}_\tau \right) \right] +C(q^{k_2}_\tau).\ee
This gives the bound (\ref{first}), because $Y_\tau$ is raised to the power $d>1$ on the left hand side, whereas the right hand side is merely linear in $y_\tau^\ell$. We can proceed similarly for (\ref{fourth}). The upper bound on the total dissipation can be derived by algebraic manipulations as in \cite{fm}.
\hfill
$\Box$

\bigskip

\subsection{Limit passage}

The following lemma is proved in \cite{mami1}.
  
\begin{lemma}\label{helly} Let $\mathcal{D}\colon\mathbb{Z}\times\mathbb{Z}\to[0,+\infty]$ \jz satisfy (\ref{assonD1}) and (\ref{assonD2})\EEE. Let $\mathcal{K}$ be a weakly sequentially compact subset of $\mathbb{Z}$. Then for every sequence $\{z_k\}_{k\in\N}$, $z_k\colon[0,T]\to \mathcal{K}$ for which
$\sup_{k\in\N}\,{\rm Var}(\mathcal{D},z_k;[0,T])<C$, $C>0$, there exists a subsequence (not relabelled), a function $z\colon[0,T]\to\mathcal{K}$, and a function $\Delta\colon[0,T]\to [0,C]$ such that:\\
\noindent
(i) ${\rm Var}(\mathcal{D},z_k;[0,t])\to \Delta(t)$ for all $t\in[0,T]$,\\
\noindent
(ii) $z_k\wto z$ in $\mathbb{Z}$ for all $t\in[0,T]$, and\\
\noindent
(iii) ${\rm Var}(\mathcal{D},z;[t_0,t_1])\le\Delta(t_1)-\Delta(t_0)$ for all $0\le t_0<t_1\le T$.
\end{lemma}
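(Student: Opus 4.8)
The plan is to reduce the statement to the classical Helly selection theorem for monotone functions and then upgrade the resulting scalar convergence to weak convergence of the $z_k(t)$ themselves. Write $\delta_k(t) := {\rm Var}(\mathcal{D}, z_k; [0,t])$. By the triangle inequality for $\mathcal{D}$ each $\delta_k$ is nondecreasing on $[0,T]$ with $0 \le \delta_k(t) \le \delta_k(T) < C$, so the classical Helly theorem for uniformly bounded monotone functions furnishes a first subsequence (not relabelled) and a nondecreasing limit $\Delta\colon [0,T] \to [0,C]$ with $\delta_k(t) \to \Delta(t)$ for every $t$; this is exactly claim (i). Since $\Delta$ is monotone, its set $J$ of discontinuity points is at most countable.

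Next I would fix the limit function on a countable set. Let $S := (\mathbb{Q} \cap [0,T]) \cup \{0,T\} \cup J$, which is countable, and note that for each fixed $s$ the values $z_k(s)$ lie in the weakly sequentially compact set $\mathcal{K}$. A diagonal extraction over $S$ then produces a further subsequence (again not relabelled) and a map $z\colon S \to \mathcal{K}$ with $z_k(s) \wto z(s)$ for all $s \in S$. Two facts about $z$ on $S$ will be used repeatedly: by the lower semicontinuity (\ref{assonD1}) and the bound $\mathcal{D}(z_k(s), z_k(s')) \le \delta_k(s') - \delta_k(s)$ for $s \le s'$ in $S$, passing to the limit gives $\mathcal{D}(z(s), z(s')) \le \Delta(s') - \Delta(s)$; and $\mathcal{K}$, being weakly compact, is norm-bounded, so the positivity assumption (\ref{assonD2}) applies to any sequence drawn from it.

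The crux is to define $z(t)$ for $t \notin S$ and to prove $z_k(t) \wto z(t)$ there. Pick $s_j \in S$ with $s_j \uparrow t$; from $\mathcal{D}(z(s_i), z(s_j)) \le \Delta(s_j) - \Delta(s_i) \to 0$ together with weak compactness of $\mathcal{K}$ and the positivity (\ref{assonD2}), one shows that $\{z(s_j)\}$ has a unique weak limit, which I call $z(t)$ (for $t \in S$ this is consistent). To identify $\lim_k z_k(t)$, extract any weakly convergent subsequence $z_{k_l}(t) \wto v$; applying (\ref{assonD1}) to $\mathcal{D}(z_k(s_j), z_k(t)) \le \delta_k(t) - \delta_k(s_j)$ yields $\mathcal{D}(z(s_j), v) \le \Delta(t) - \Delta(s_j)$, and letting $s_j \uparrow t$ at a continuity point of $\Delta$ forces $\mathcal{D}(z(s_j), v) \to 0$, whence $z(s_j) \wto v$ by (\ref{assonD2}) and therefore $v = z(t)$. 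As every weak cluster point of the bounded sequence $z_k(t)$ equals $z(t)$, the whole sequence converges, giving (ii) at continuity points; discontinuity points were absorbed into $S$ and are covered by the diagonal extraction. This is the step I expect to be the main obstacle, precisely because the limit must be pinned down uniquely, and that is exactly what the positivity condition (\ref{assonD2}) buys us.

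Finally, (iii) follows by lower semicontinuity: for any partition $t_0 = r_0 < \cdots < r_M = t_1$, summing $\mathcal{D}(z(r_{i-1}), z(r_i)) \le \liminf_k \mathcal{D}(z_k(r_{i-1}), z_k(r_i))$ (from (\ref{assonD1}) and (ii)) and using a Fatou-type inequality gives $\sum_i \mathcal{D}(z(r_{i-1}), z(r_i)) \le \liminf_k (\delta_k(t_1) - \delta_k(t_0)) = \Delta(t_1) - \Delta(t_0)$; taking the supremum over partitions yields ${\rm Var}(\mathcal{D}, z; [t_0, t_1]) \le \Delta(t_1) - \Delta(t_0)$.
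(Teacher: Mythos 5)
Your proof is correct, but there is nothing in the paper to compare it against: the paper does not prove Lemma~\ref{helly} at all, it simply states that the lemma is proved in \cite{mami1} (Mainik--Mielke). What you have written is essentially a reconstruction of the argument in that reference, i.e.\ the generalized Helly selection principle for rate-independent processes: classical Helly selection applied to the scalar dissipation functions $\delta_k(t)={\rm Var}(\mathcal{D},z_k;[0,t])$, a diagonal extraction over a countable set $S$ containing the (at most countable) jump set of $\Delta$, identification of the weak limit at the remaining points by playing the lower semicontinuity (\ref{assonD1}) against the positivity (\ref{assonD2}), and a partition-plus-liminf argument for (iii). So your proposal supplies exactly the content that the paper outsources to its citation, and it does so correctly.

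Three cosmetic remarks, none of which is a gap. First, monotonicity of $\delta_k$ needs only the nonnegativity of $\mathcal{D}$ (adjoining a point to a partition of $[0,t]$ cannot decrease the sum); the triangle inequality plays no role there. Second, your parenthetical claim that the left-approximation definition of $z(t)$ is ``consistent'' for $t\in S$ is guaranteed only at continuity points of $\Delta$: at a jump point $t\in J$ the weak limit of $z(s_j)$ as $s_j\uparrow t$ need not coincide with the diagonally extracted $z(t)$. Since you never use this consistency --- points of $S$ are handled by the diagonal extraction and every $t\notin S$ is a continuity point of $\Delta$ --- the proof stands, but the remark should be deleted or restricted. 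Third, you use the order $\mathcal{D}(z(s),z(s'))$ with $s\le s'$ in all increment estimates, whereas the paper's displayed definition of ${\rm Var}$ writes the arguments in the opposite order; the paper's own later usage agrees with yours, and in any case the $\min$ appearing in (\ref{assonD2}) makes the argument insensitive to this convention, so this is only worth a clarifying sentence.
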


\bigskip
Let us denote  $\mathbb{X}:= L^{\beta}(\O;\R^{n\times n})\times L^\omega(\O;\R^m)$.
Finally, we proved the existence of an energetic solution.

\bigskip

\begin{theorem}\label{existence}
Let $\alpha^{-1}+\beta^{-1}\le d^{-1}<\jz (n-1)^{-1}\EEE$, $\jz d>\frac{\beta(n-1)}{\beta-1}\EEE$. Let $q^0\in\mathbb{Q}$ be a stable initial condition. Let the assumptions on $\mathcal{W}$, $\mathcal{D}$, $f$ and $g$  from Section~\ref{assump} hold. Let further (\ref{nodi1}) or  (3.A), (3.B), (3.C) hold.
Then there is  a process $q\colon[0,T]\to \mathbb{Q}$ 
with $q(t)=(y(t),z(t))$  such that $q$ is an energetic solution according to Definition~\ref{en-so}.
The following convergence statements also hold:\\
 \noindent
 (i)
for a $t$-dependent (not relabelled) subsequence  w-$\lim_{\tau\to 0}y_{\tau}(t)=y(t)$ in $W^{1,d}(\O;\R^n)$ for all $t\in[0,T]$, \\   
 \noindent
(ii) for a (not relabelled) subsequence $\lim_{\tau\to 0} z_{\tau}(t)=z(t)$ in $\mathbb{X}$ for all $t\in[0,T]$, \\
\noindent
 (iii) for a (not relabelled) subsequence
 $\lim_{\tau\to 0}\I_{\tau}(t,q_{\tau}(t))=\I(t,q(t))$ for all $t\in[0,T]$, and\\
\noindent
(iv) for a (not relabelled) subsequence $\lim_{\tau\to 0} {\rm Var}(\mathcal{D},q_\tau;[0,t])= {\rm Var}(\mathcal{D},q;[0,t])$ for all $t\in[0,T]$.

\end{theorem}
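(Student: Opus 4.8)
The plan is to run the standard time-incremental (Rothe) scheme of Mielke--Theil--Levitas, all of whose building blocks are already available: existence of incremental minimizers (Proposition~\ref{timestep-ex}), their stability and the two-sided discrete energy inequality (Proposition~\ref{prop:3}), and the uniform \emph{a priori} bounds (Proposition~\ref{prop:4}). The only nonstandard ingredient is that weak lower semicontinuity and energy convergence must be read off gpc-convergence, which is exactly the content of Proposition~\ref{limstab}. First I would apply Helly's selection principle (Lemma~\ref{helly}) to the interpolants $z_\tau$: the dissipation bound~\eqref{second} and the uniform bound~\eqref{fourth} (which confines $z_\tau(t)$ to a weakly sequentially compact $\mathcal{K}\subset\mathbb{Z}$) produce a subsequence, a limit $z\colon[0,T]\to\mathcal{K}$ with $z_\tau(t)\wto z(t)$ for every $t$ — hence $z_\tau(t)\to z(t)$ in $\mathbb{X}$ by the compact embedding, which is~(ii) — and a nondecreasing $\Delta$ with ${\rm Var}(\mathcal{D},z_\tau;[0,t])\to\Delta(t)$ and ${\rm Var}(\mathcal{D},z;[t_0,t_1])\le\Delta(t_1)-\Delta(t_0)$. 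At each fixed $t$ I would then extract from~\eqref{first} a $t$-dependent subsequence $y_\tau(t)\wto y(t)$ in $W^{1,d}(\O;\R^n)$, which is~(i), and set $q(t):=(y(t),z(t))$, noting $q(0)=q^0$.

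For stability of the limit, each interpolant satisfies $q_\tau(t)\in\mathcal{S}(t)$ by~\eqref{eq:stab}, and $q_\tau(t)\wto q(t)$ with $\I(t,q_\tau(t))$ bounded; thus Proposition~\ref{limstab} applies once~\eqref{limsup} is known, and~\eqref{limsup} holds under~\eqref{nodi1} or (3.A)--(3.C) by the two propositions following it. This yields $q(t)\in\mathcal{S}(t)$, i.e. the stability inequality~\eqref{stblt}, together with $\I(t,q_\tau(t))\to\I(t,q(t))$. Since every subsequence admits a further subsequence with the same scalar limit, this energy convergence in fact holds along the fixed Helly subsequence, giving~(iii) for all $t$.

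The energy balance I would then obtain from two one-sided estimates. The upper estimate comes from passing to the $\liminf$ in the second inequality of~\eqref{eq:energy} with $t_I=0$, using lower semicontinuity under gpc-convergence, ${\rm Var}(\mathcal{D},q;[0,t])\le\Delta(t)$ from Helly, and convergence of the loading term $\int_0^t\dot L(s,q_\tau(s))\,\md s\to\int_0^t\dot L(s,q(s))\,\md s$ (by linearity of $\dot L$ in $y$, the uniform $W^{1,d}$-bound, the compact Sobolev embeddings into $L^{\tilde d'}$ and $L^{\hat d'}(\Gamma_1)$ ensured by the conditions on $\tilde d,\hat d$, and dominated convergence). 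The lower estimate I would \emph{not} try to extract from the first inequality of~\eqref{eq:energy}, since that controls only $\Delta$ and not ${\rm Var}(\mathcal{D},q;[0,t])$; instead I would invoke the abstract fact that any globally stable process with time-measurable energy and the power control provided by~\eqref{ass-f}--\eqref{ass-g} automatically satisfies the lower energy estimate, the proof being a telescoping Riemann sum over fine partitions that tests~\eqref{stblt} at intermediate times. Adding the two estimates gives~\eqref{e-bal} and, since ${\rm Var}(\mathcal{D},q;[0,t])\le\Delta(t)$ already, forces equality ${\rm Var}(\mathcal{D},q;[0,t])=\Delta(t)=\lim_{\tau\to0}{\rm Var}(\mathcal{D},q_\tau;[0,t])$, which is~(iv).

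The main obstacle is precisely this lower energy estimate: it cannot be read off the discrete scheme, which only delivers the a priori over-estimate $\Delta$ of the dissipated energy, and must instead be deduced from global stability of the limit alone — all while reconciling the $t$-dependence of the deformation subsequence in~(i) with the single fixed subsequence along which $z$, the energies, and the dissipation simultaneously converge.
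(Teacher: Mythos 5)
Your overall scheme is the same as the paper's: Helly's selection principle (Lemma \ref{helly}) together with the a priori bounds of Proposition \ref{prop:4} produces the limit process; Proposition \ref{limstab}, with \eqref{limsup} secured by \eqref{nodi1} or (3.A)--(3.C), gives stability of the limit; the upper energy estimate comes from the second inequality in \eqref{eq:energy}; the lower estimate is obtained by testing stability of the limit along fine partitions, which is exactly the paper's Step 3 (there the convergence of the Riemann sums to $\int \dot L(s,q(s))\,\md s$ is supplied by \cite[Lemma~4.12]{dmgfrt}); and (iv) follows from the same squeezing argument as in \eqref{improvedconv}. Your observation that mutual stability of any two subsequential limits forces equality of their energies, so that (iii) holds along the fixed subsequence, is also sound.

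The genuine gap is the dominated-convergence claim for the loading term in the upper estimate. Dominated convergence requires $\dot L(s,q_\tau(s))\to\dot L(s,q(s))$ for a.e.\ $s$ along the \emph{fixed} subsequence, but the deformations converge only along $s$-dependent subsequences: along the fixed subsequence, different sub-subsequences of $\{y_\tau(s)\}_\tau$ may have different weak limits $\hat y$. Each pair $(\hat y,z(s))$ is stable, and all such pairs share the same value of $\I(s,\cdot)$ (which is why your argument for (iii) works), but equality of $\I=V-L$ does not imply equality of $L(s,\cdot)$, let alone of $\dot L(s,\cdot)$: the stored and loading parts can differ with compensation. Hence $\dot L(s,q_\tau(s))$ need not converge at all, and any limit it has need not equal $\dot L(s,q(s))$ for the $y(s)$ you fixed beforehand, independently of the power term. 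The paper's Step 2 exists precisely to repair this: one sets $\theta_\tau(t):=\partial_t L(t,q_\tau(t))$, extracts a weak$^*$ limit $\theta$ in $L^\infty(0,T)$ so that $\int_0^t\theta_\tau(s)\,\md s\to\int_0^t\theta(s)\,\md s$, bounds $\int_0^t\theta(s)\,\md s\ge\int_0^t\theta_{\rm i}(s)\,\md s$ by Fatou's lemma with $\theta_{\rm i}:=\liminf_{\tau\to0}\theta_\tau$ pointwise, and then identifies $\theta_{\rm i}(s)=\dot L(s,q(s))$ by choosing the $s$-dependent subsequence that defines $y(s)$ so that it simultaneously realizes the liminf $\theta_{\rm i}(s)$. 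In other words, the definition of $y(s)$ must be coordinated with the convergence of the power term rather than performed independently; with that modification (which is the paper's argument) your proof closes.
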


\proof
We have adapted the proof \jz from \cite{fm, mielke2} \EEE and divided it into \jz three \EEE steps.

\noindent
{\it Step 1}:
\jz Assertion (i) follows from the \textit{a priori} estimate in Proposition~\ref{prop:4} and (ii) results from Lemma~\ref{helly}. 
Hence \EEE we have a limit $q(t)=(y(t),z(t))$. We easily get that $q(t)\in\mathbb{Q}$ for all $t\in[0,T]$.  

Put $S(t,\tau):=\max_{k\in\N\cup\{0\}}\{k\tau;\ k\tau\le  t\}$. Then $\lim_{\tau\to 0}S(t,\tau)=t$  and by definition $q_\tau(t):=q_\tau(S(t,\tau))\in\mathcal{S}(S(t,\tau))$. As we have seen, $\mathcal{D}$ can be assumed such that (\ref{limsup}) holds. Hence by Proposition~\ref{limstab}, the limit is stable, i.e. $q(t)\in \mathcal{S}(t)$ (thanks to our \textit{a priori} estimates, we can pass to a gpc-convergent subsequence to get lower semicontinuity).
\\

\noindent
{\it Step 2}:
\jz Notice that $\theta_{\tau}(t):= \frac{\partial L}{\partial t}(t,q_{\tau}(t))$ is bounded in $L^\infty(0,T)$ by (\ref{ass-f}), (\ref{ass-g}) and (\ref{first}), so we deduce a weak*-convergent subsequence, which we do not relabel, with the limit $\theta$.  By Fatou's lemma $ \theta_{\rm i}\le\theta$. \EEE

Our interpolation satisfies $q_\tau(t)=q_\tau(k\tau)$ for $0\le t-k\tau< \tau$. Successively using (\ref{eq:energy}), (\ref{ass-f}), (\ref{ass-g}) and (\ref{first}), we get for some $C$, $C_1>0$ 
\begin{eqnarray*}
\I(t,q_\tau(t))+{\rm Var}(\mathcal{D},q_\tau;[0,t])&\le& \I(k\tau,q_\tau(k\tau))+{\rm Var}(\mathcal{D}, q_\tau;[0,k\tau]) +C\tau\\
&\le& \I(0,q_\tau(0))-\int_0^{k\tau}\dot L(s,q_\tau(s))\,\md s +C\tau \\
&\le& \I(0,q_\tau(0))-\int_0^t \dot L(s,q_\tau(s))\,\md s+C_1\tau.
\end{eqnarray*} 

  On account of Lemma~\ref{helly} (i) and Proposition \ref{limstab} we find that for $\tau\to 0$
\begin{eqnarray*}
\I(t,q(t))+ \Delta(t) &\le& \I(0,q(0))-\int_0^t \theta(s)\,\md s.\end{eqnarray*}

\jz Following \cite{mielke2} we set $\theta_{\rm i}(t):=\liminf_{\tau\to 0}\theta_{\tau}(t)$. \EEE Since $\Delta(t)\ge{\rm Var}(\mathcal{D},q;[0,t])$ and by Fatou's lemma  $\int_0^t\theta(s)\,\md s\ge\int_0^t\theta_{\rm i}(s)\,\md s$, for a.a. $t\in[0,T]$ we get 
\begin{eqnarray*}\I(t,q(t))  +{\rm Var}(\mathcal{D},q;[0,t]) &\le& \I(0,q(0))-\int_0^t \theta_{\rm i}(s)\,\md s.\end{eqnarray*}

The convergence $\theta_\tau(s)=\dot L(s,q_\tau(s))\to\dot L(s,q(s))$ and extracting an $s$-dependent subsequence of $\{\theta_\tau(s)\}$ converging to $\theta_{\rm i}(s)$ show that $\theta_{\rm i}(s)=\dot L(s,q(s))$.
We finally get the upper energy estimate 
\be
\label{upperest}
\I(t,q(t)) +{\rm Var}(\mathcal{D},q;[0,t]) &\le& \I(0,q(0))-\int_0^t \dot L(s,q(s)\,\md s.
\ee

\jz\textit{Step 3:} \EEE The fact that $q(t)$ is stable for all $t\in[0,T]$ will be useful in proving the lower estimate. Consider a partition of a time interval $[t_I,t_{I\!I}]\subset [0,T]$ such that $t_I=\vartheta_0^M<\vartheta_1^M<\vartheta_2^M<\cdots<\vartheta_{K_M}^M=t_{I\!I}$ and $\max_{i} (\vartheta_i^M-\vartheta_{i-1}^M)=:\vartheta^M\to 0$ as $M\to\infty$. Let us test the stability of $q(\vartheta_{k-1}^M)$ with $q(\vartheta_k^M)$, $k=1,\ldots, K_M$.
Similarly to (\ref{eq:energy0}) we get  
\be
  \label{eq:energy00}
  \sum_{k = 1}^{K_M} 
  \left[L\((\vartheta_{k-1}^M, q(\vartheta_k^M)\)-L\(\vartheta_k^M,q(\vartheta_k^M)\)\right]&\le& 
  \G \(t_{I\!I}, q(t_{I\!I})\) - \G \(t_I, q(t_I)\)\\
& +& \sum_{k = 1}^{K_M} \D \(q(\vartheta_{k-1}^M), q(\vartheta_k^M)\).\nonumber 
\ee
  
Thus, 
\be
  \label{eq:energy000}
  \sum_{k =  1}^{K_M} 
  -\int_{\vartheta_{k-1}^M}^{\vartheta_k^M}\dot L(s, q(\vartheta_k^M))\,\md s&\le& 
  \G \(t_{I\!I}, q(t_{I\!I}\) - \G \(t_I, q(t_I)\)\nonumber\\
& +& {\rm Var} (\mathcal{D},q;[t_I,t_{I\!I}]).
\ee

We finally \jz rearrange the terms as \EEE 
\be\label{eq:forces}\sum_{k =1}^{K_M} \int_{\vartheta_{k-1}^M}^{\vartheta_k^M}\dot L(s, q(\vartheta_k^M))\,\md s&=& \sum_{k = 1}^{K_M} \dot L(\vartheta_k^M,q(\vartheta_k^M))(\vartheta_k^M-\vartheta_{k-1}^M)\nonumber\\
& +& \sum_{k = 1}^{K_M}\int_{\vartheta_{k-1}^M}^{\vartheta_k^M}(\dot L(s, q(\vartheta_k^M))-\dot L(\vartheta_k^M,q(\vartheta_k^M)))\,\md s. \ee

The last sum on the right-hand side of (\ref{eq:forces}) goes to zero with $\vartheta^M\to 0$ because the time derivative of the external loading is uniformly continuous in time by (\ref{ass-f}) and (\ref{ass-g}).  
The first sum on the right-hand side converges to $\int_{t_I}^{t_{I\!I}}\dot L(s,q(s))\,\md s$ by \cite[Lemma~4.12]{dmgfrt}. 

Altogether, the upper and lower estimates give us the energy balance 

\be
\label{e-balance}
\I(t,q(t)) +{\rm Var}(\mathcal{D},q;[0,t]) &=& \I(0,q(0))-\int_0^t \dot L(s,q(s)\,\md s.
\ee

Moreover, 
\be\label{improvedconv}
& &\I(0,q(0)) -\int_0^t\theta_{\rm i}(s)\,\md s\le \I(t,q(t)) +{\rm Var}(\mathcal{D},q;[0,t]))\le \I(t,q(t)) +\Delta(t)\nonumber\\
&\le& \I(0,q(0)) -\int_0^t\theta(s)\,\md s\le \I(0,q(0)) -\int_0^t\theta_{\rm i}(s)\,\md s.
\ee
Thus in fact, all inequalities in (\ref{improvedconv}) are equalities and we get (iv). \jz Proposition~\ref{limstab} also implies (iii). \EEE
\hfill\qed

\bigskip

Finally, we include a simple example from single-crystal plasticity, covered by our approach, in the spirit of \cite{gurtin} and \cite{amLieGroups}.
 
\begin{example}[single slip without hardening]
Consider two orthonormal vectors $a$, $b\in\R^3$, describing the motion of edge dislocations in such a way that $a$ is the glide direction and $b$ is the slip-plane normal.
Further we focus on a particular case of the so-called {\it separable} material 
where 
$$\mathcal{W}(F_\e, H, z)=\mathcal{W}_1(F_\e)+c|H|^2+c\det(F_\e)^{-s}+\varepsilon|F_\p|^6+\varepsilon|\nabla F_\p|^6,\;c,s,\varepsilon>0,$$
with $F_\p(x,t)=\mathbb{I}+\gamma(x,t)a\otimes b$ where $\gamma$ is the plastic slip and we can choose $\mathcal{W}_1$ to be e.g. the stored energy density of the Saint Venant–Kirchhoff material, i.e. $\mathcal{W}_1(F_\e)=\frac{1}{8}(\T{F_\e}F_\e-\mathbb{I}):\mathbb{C}:(\T{F_\e}F_\e-\mathbb{I})$ so that $\mathcal{W}_1(F_\e)\ge c(|F_e|^4-1)$.
The vectors $a$, $b$ are not fixed in the reference configuration in general (more precisely, they lie in an intermediate lattice space, \jz see \EEE \cite{gurtinBook}). The slip-plane normal $\tilde b$ in the reference configuration is given by $\tilde b= (F_\p)^\top b$. However, in the special case of a single slip we obtain $\tilde b=b$ and the slip-plane normal remains unchanged during plastic transformations.

As $F_\p$ is completely described by $\gamma$, we identify $z:=\gamma$ and use the dissipation potential
$$
\delta(\dot\gamma)=\kappa|\dot\gamma|,
$$
where $\kappa>0$ represents the resistance to the slip.

This corresponds to the dissipation distance 
$$
\mathcal{D}(\gamma_1,\gamma_2)=\int_\Omega \kappa|\gamma_1(x)-\gamma_2(x)|\md x.
$$
An extension to multi-slip, described by several glide directions $\{a_i\}$ and slip-plane normals $\{b_i\}$, $1\le i\le N$, would be feasible, cf. \cite{amLieGroups}.
\end{example}
As a last remark, note that our stored energy $\mathcal{W}$ could also depend on \mk  $F_{\rm e}^{-1}$, \EEE cf. \cite{bbmkas}.

\bigskip


\bigskip

{\bf Acknowledgements.} 
\mk This work was  supported by the GA\v{C}R project 18-03834S (MK \& JZ)  and by    the DFG Priority Programme (SPP) 2256 (JZ).

\end{document}